\newenvironment{mathframed}{\framed%
\allowdisplaybreaks
\vspace*{-\abovedisplayskip}\noindent}{%
\vspace*{-\dimexpr\baselineskip+\topsep}\endframed}
\newcommand*\rot{\rotatebox{90}}
\numberwithin{equation}{section}
\newtheorem{theorem}{Theorem}[section]
\newtheorem{lemma}[theorem]{Lemma}
\newtheorem{assumption}[theorem]{Assumption}
\newtheorem{algorithm}[theorem]{Algorithm}
\newtheorem{definition}[theorem]{Definition}
\newtheorem{remark}[theorem]{Remark}
\newcommand{\nr}[1]{\ensuremath{\left\|{#1}\right\|}}
\newcommand{\Ome}{{\Omega}}
\newcommand{\p}{{\partial}}
\newcommand{\nab}{\nabla}
\title{Improved convergence of the Arrow-Hurwicz iteration for the Navier-Stokes equation via grad-div stabilization and Anderson acceleration}
\author[label0]{Pelin G. Geredeli}
\ead{peling@iastate.edu}
\author[label1]{Leo G. Rebholz}
\ead{rebholz@clemson.edu}
\author[label1]{Duygu Vargun}
\ead{dvargun@clemson.edu}
\address[label1]{Department of Mathematical Sciences, Clemson University, Clemson, SC 29634, USA}
\address[label0]{Department of Mathematics, Iowa State University, Ames IA, 50011, USA}
\author[label0]{Ahmed Zytoon}
\ead{AMZ56@pitt.edu}
\begin{document}

\begin{abstract}  
We consider two modifications of the Arrow-Hurwicz (AH) iteration for solving the incompressible steady Navier-Stokes equations for the 
purpose of accelerating the algorithm: grad-div stabilization, and Anderson acceleration.  AH is a classical iteration for general
saddle point linear systems and it was later extended to Navier-Stokes iterations in the 1970's which has recently come under study again.
We apply recently developed ideas for grad-div stabilization and divergence-free finite element methods along with Anderson acceleration 
of fixed point iterations to AH in order to improve its convergence.  Analytical and numerical results show that each of these methods
improves AH convergence, but the combination of them yields an efficient and effective method that is competitive with more commonly used solvers.
\end{abstract}

%%   ------- KEYWORDS   ------  %
\begin{keyword}
Anderson acceleration\sep Arrow-Hurwicz\sep Navier-Stokes equations\sep Finite element method (FEM)
\end{keyword}

\maketitle

\section{Introduction}

We consider in this paper solving the incompressible steady Navier-Stokes equations (NSE) with the Arrow-Hurwicz (AH) iteration.  The steady NSE
defined on a domain $\Omega \subset \mathbb{R}^d$ ($d$=2 or 3) are given by
\begin{subequations}
\label{eqn:3DProblem}
\begin{alignat}{2}
\label{eqn:3DProblem1}
-\nu \Delta {u} + (u\cdot\nab)u + {\nab} p & = f \qquad &&\text{in }\Omega,\\
\label{eqn:3DProblem2}
\nab\cdot u & =0 \qquad &&\text{in } \Omega,\\
\label{eqn:3DProblem3}
u & = 0\qquad &&\text{on }\p \Omega,
\end{alignat}
\end{subequations}
where $u$ and $p$ represent the unknown velocity and pressure, $f$ a given forcing, and $\nu$ the kinematic viscosity which is 
inversely proportional to the Reynolds number $Re$.  {\color{black} Extension of this work to one time step in a temporal discretization of the time dependent NSE
is straight-forward.}

Among various novel iterative methods for solving saddle point systems, the AH algorithm for the steady NSE was seemingly first studied by Temam in 1977 in \cite{T77}, and also {\color{black} more recently} in \cite{CHS17}.  The AH iteration is given with the following decoupled equations:
\begin{align*}
-\frac{1}{\rho} \Delta (u^{m+1} - u^m) - \nu \Delta u^m + u^m \cdot\nabla u^{m+1} + \nabla p^m & = f, \\
\alpha( p^{m+1} - p^m) + \rho \nabla \cdot u^{m+1} & = 0,
\end{align*}
with $\rho$ and $\alpha$ being user determined parameters.  We note that if $\rho=\nu^{-1}$ then AH is exactly the modified Uzawa algorithm from \cite{CHS15}.  This iteration is interesting because it is efficient since the two equations decouple, with the second
equation being simple and the first equation requiring a typical convection diffusion solve{\color{black}r} where one controls the diffusion coefficient (in each iteration) with $\rho$.  Hence from
an implementation (i.e. linear algebraic) point of view, the cost of one AH iteration is very cheap compared to that of a typical Picard or Newton iteration which needs to resolve a saddle point system.  However, a serious drawback of the AH {\color{black}method} is that its convergence properties are not particularly good and even though each iteration is cheap, the total number of iterations can be very large.
The purpose of this paper is to improve the AH algorithm so that it is a competitive and even attractive method for efficient computing of accurate {\color{black}steady} NSE solutions.  We enhance the AH {\color{black}method} with two recently developed ideas, one from computational fluid dynamics (grad-div stabilization) and the other from nonlinear solver theory (Anderson acceleration). {\color{black}Indeed, we show that the combination of these two improvements theoretically and computationally yields that AH method can become a very good solver.}

The classical AH iteration developed in 1958 by Arrow and Hurwicz \cite{AH58} is a stationary iterative method to solve saddle point linear systems.
As noted in \cite{BGL05}, this linear algebraic AH iteration can be regarded as an inexpensive alternative to the (linear algebraic) Uzawa method \cite{U58} whenever solves with 
the matrix arising from the convection-diffusion operators are expensive.  Temam seems to be the first to export the AH iteration ideas to Galerkin methods for solving the
steady NSE, and was able to prove convergence (although without a rate) under certain choices of parameters.  The iteration was (finally) proven to be contractive in 2017 \cite{CHS17}, where it was shown  to be linearly convergent under very small data and certain choices of parameters.  While this was a big step forward for AH and the convergent rate was proven less than 1, the exact rate was not easy to decipher and in practice could be very close {\color{black}to} 1.  The numerical tests in \cite{CHS17} for some relatively easy problem revealed {\color{black}that} the AH {\color{black}method} could be made to converge with good parameter choices, but the number of iterations could be very large (e.g. over 700 iterations {\color{black}with}  $Re=100$ {\color{black}for a} 2d driven cavity problem, and over 10,000 for a 2d steady flow past a cylinder).  While iterations of AH would likely be 5-20 times cheaper than one iteration of usual Picard (e.g. if Krylov solvers with preconditioners such as those in \cite{HR13,BB12,benzi} to solve the saddle point linear systems at each iteration), such high iteration counts still make AH uncompetitive.

Herein we aim to improve {\color{black}the convergence properties of the AH method} by enhancing it with two techniques. The first is the addition of grad-div stabilization {\color{black} which refers to} consistent penalization term that adds $0=-\gamma \nabla (\nabla \cdot u)$ to the NSE momentum equation before discretizing, where $\gamma>0$ is a user defined parameter (how large it should be depends on many factors, see e.g. \cite{JLMNR17}).  It was first proposed by Hughes and Franca in 1988 \cite{FH88}, and has been shown to improve accuracy of finite element approximations \cite{OR04}, improve saddle point linear solvers \cite{OR04,benzi,HR13}, and help various NSE nonlinear iterative solvers converge faster, e.g. \cite{RX15,RVX18}.  The grad-div stabilized AH iteration takes the form
\begin{align*}
-\frac{1}{\rho} \Delta (u^{m+1} - u^m) - \nu \Delta u^m + u^m \cdot\nabla u^{m+1} - \gamma \nabla (\nabla \cdot u^{m+1}) + \nabla p^m & = f, \\
\alpha( p^{m+1} - p^m) + \rho \nabla \cdot u^{m+1} & = 0.
\end{align*}
We show {\color{black}that} the existing convergence theory can {\color{black}dramatically} be improved with the use of grad-div stabilization theory.  Moreover, 
we show that under a certain choice of parameters and in a particular (but commonly used) discrete setting, grad-div stabilized AH {\color{black}method} is equivalent to the classical iterated penalty Picard
iteration.  By establishing this connection, we are able to bring to bear the long established theory for this classical iteration to the AH setting, which establishes a linear convergence rate close to that of Picard for sufficiently large $\gamma$.  

The second enhancement we provide to the AH {\color{black}method} is Anderson acceleration (AA).  AA is an extrapolation technique used to improve convergence of fixed point iterations.  It was first developed in 1965 by D.G. Anderson \cite{Anderson65}, and its use has exploded in the last decade after the paper of Walker and Ni in 2011 showed how effective AA can be on a wide range of problems \cite{WaNi11}.  It has recently been used to improve convergence and robustness of solvers for various types of flow problems \cite{LWWY12,PRX19,PRX21}, geometry optimization \cite{PDZGQL18},  radiation diffusion and nuclear physics \cite{AJW17,TKSHCP15}, molecular interaction \cite{SM11}, and many others e.g. \cite{WaNi11,K18,LW16,LWWY12,FZB20,WSB21,HS16}.  In \cite{RVX21}, AA was shown to significantly improve the convergence and robustness for the IPP method for the NSE and allow for a much wider range of penalty parameter choices.  Given the success AA has had in improving other types of nonlinear iterations for the NSE, applying it to the AH {\color{black}method} seems a natural next step.  Moreover, due to its dramatic improvement of the IPP method in \cite{RVX21} and our showing the strong connection of grad-div stabilized AH {\color{black}method} to IPP {\color{black}method}, applying AA to grad-div stabilized AH seems an optimal combination to improve AH convergence behavior.  A general convergence framework was developed for AA in \cite{EPRX20} and then sharpened in \cite{PR21} which allows for theoretical justification of improved linear convergence from AA, if the associated fixed point function satisfies sufficient smoothness properties.  We will set up the AH {\color{black}iteration} as a fixed point problem and prove that its fixed point operator satisfies the assumptions needed to apply the AA convergence theory.  Furthermore, extensive computations of AH with AA are performed, and AA is observed to provide a dramatic improvement in convergence behavior, with the best convergence coming from combining AA with grad-div stabilization.

This article is arranged as follows.  Section 2 provides the necessary notation and mathematical preliminaries used throughout the paper.  In section 3 we consider the theoretical improvement provided by grad-div stabilization, while in section 4 we show how the fixed point operator associated with the AH iteration allows for the AA theory from \cite{PR21} to be applied.  Finally, in section 5, we give results of several numerical tests that show AH {\color{black}method} enhanced with AA and grad-div stabilization can be a very effective nonlinear solver for the steady NSE.

\section{Preliminaries}\label{notation}

In this section we provide some mathematical preliminaries and notation that will be {\color{black}used} throughout the paper.  We begin by defining the following function spaces on a domain $\Omega$ that either has smooth boundary or is a convex polygon:
\begin{align*}
L^2( \Omega)&:=\{w:\Omega \mapsto \mathbb{R}:\ \|w\|_{{\it L}^2(\Omega)}:= \left(\int_{\Omega}\ {|w|^2}\,dx\right)^{1/2}<\infty \},\\
H^m(\Omega)&:=\{w:\Omega \mapsto \mathbb{R}:\ \|w\|_{{\it H}^m(\Omega)}:= \left(\sum_{|\beta| \leq m} \|{\it D}^\beta w\|_{{\it L}^2( \Omega)}^2\right)^{1/2}<\infty \}.
\end{align*}
{\color{black}Throughout this paper,  $(.,.) $ and $\| \cdot \|$ denote} the inner product and norm on $L^2(\Ome)$, respectively.  All other norms will be denoted with subscripts.  Also, we define the following natural spaces for NSE:
\begin{align*}
Q&:=\{w\in {\it L}^2( \Omega):\ \int_{\Omega}\ {w}\,dx =0  \},\\
X &:=\{{w}\in{\it H}^1(\Omega):\  {w}|_{\partial\Omega}=0   \},
\end{align*}
{\color{black}and we} do not distinguish vector and scalar valued spaces, as it will be clear from context.

The skew-symmetric trilinear form $b^*: X\times X \times X \rightarrow \mathbb{R}$ is defined by
\begin{equation}
b^*(u, v ,w) = \frac{1}{2}(((u\cdot\nab) v ,w) - (u\cdot\nab)w, v )),
\end{equation}
{\color{black}and it can easily be observed that} 
\begin{equation}
b^*(u, v ,v)=0. \label{skew1}
\end{equation}
We will {\color{black}also} utilize the well known bound resulting from H\"older's and Sobolev inequalties \cite{Laytonbook}:
\begin{equation}
\left| b^*(u, v ,w) \right| \le M \| \nabla u \| \| \nabla v \| \| \nabla w \|, \label{skew2}
\end{equation}
where $M$ is a constant depending only on $\Omega$.

% The weak formulation for the steady incompressible NSE \eqref{eqn:3DProblem} with a grad-div term reads:  Find $({u}, p)\in X \times Q$ such that $\forall ({ v }, q)\in X\times Q$ we have
%\begin{subequations}\label{Eq:CTS}
%\begin{alignat}{2}
%\label{Eq:CTS1}
%\nu(\nabla{u},\nabla{ v }) + \gamma(\nab\cdotu,\nab\cdot v ) + b(u;u, v ) - (\nab\cdot v , p)&=({f}, v )\\
%%
%\label{Eq:CTS2}
% (\nab\cdotu,q)&=0,
%\end{alignat}
%\end{subequations}

\subsection{Finite element preliminaries}

Let $X_h \times Q_h \subset X \times Q$
be conforming and finite dimensional finite element spaces for the velocity and pressure.  Then
a finite element method for \eqref{eqn:3DProblem}, based on the standard velocity-pressure formulation and equipped with grad-div stabilization
seeks $(u,p)\in X_h\times Q_h$
such that $\forall ( v ,q)\in X_h\times Q_h$ we have
\begin{subequations}\label{Eq:FEM_VP}
\begin{alignat}{2}
\nu(\nabla{u},\nabla{ v }) + \gamma(\nab\cdot u,\nab\cdot v ) + b^*(u,u, v ) - (\nab\cdot v , p)&=({f}, v )\\
 (\nab\cdot u,q)&=0.
\end{alignat}
\end{subequations}
Here, $\gamma\ge 0$ is referred to as the grad-div stabilization parameter.  The discrete problem \eqref{Eq:FEM_VP} is well-posed 
if the pair satisfies the inf-sup condition
\[
\sup_{0\neq  v \in X_h} \frac{(\nab\cdot  v ,q)}{\|\nabla  v \|}\ge \beta \|q\|\qquad \forall q\in Q_h,
\]
for some $\beta>0$, and the small data condition $\kappa:= M\nu^{-2} \| f \|{-1}<1$ {\color{black}holds.}  The small data condition is needed for uniqueness (although precisely how sharp it is remains an open question), although existence and boundedness can be proven for any given data.  Common choices that satisfy the inf-sup condition with $\beta$ independent of $h$, and the ones we make herein are $X_h \times Q_h = P_k(\tau_h)\cap X \times P_{k-1}(\tau_h) \cap C^0(\Omega)$ Taylor Hood elements (with $\tau_h$ representing a regular conforming mesh of $\Omega$) and $X_h \times Q_h = P_k(\tau_h)\cap X \times P_{k-1}(\tau_h) \cap Q$ Scott-Vogelius elements with appropriate $k$ and mesh structure (see e.g. \cite{GS19,JLMNR17} for more details).

We will utilize the bound
\begin{equation}
\| \nabla u \| \le \nu^{-1} \| f\|_{-1}, \label{nsebound}
\end{equation}
which is proven in \cite{GR86,Laytonbook} and can be easily deduced from  \eqref{Eq:FEM_VP}.

%%%%%%%%%%%%%%%%%%%%%%%%%%%%%%%%%%
%%%%%%%%%%%%%%%%%%%%%%%%%%%%%%%%%%
\subsection{The Arrow-Hurwicz method}

We recall the Arrow-Hurwicz (AH) method from \cite{CHS17} for steady Navier-Stokes equations. The method is given in \cite{T77} with a slight change of parameter variables.

\begin{mathframed}
%\smallskip\\
%{\bf Algorithm 1 :}\label{Alg.1}
\begin{algorithm} \label{Alg.1}
Let $\rho , \alpha > 0$ be user selected parameters, then
\begin{enumerate}
\item Let $u^0\in X_h$ and $p^0\in Q_h$ be the solution of the mixed formulation: $\forall ({ v }, q)\in X_h\times Q_h$ : 

\begin{subequations}\label{Eq:CTSS}
\begin{alignat}{2}
(\nabla{u^0},\nabla{ v }) - (\nab\cdot v , p^0)&=({f}, v )\\
(\nab\cdot u^0,q)&=0,
\end{alignat}
\end{subequations}

\item For $m \geq 0 $, we define $u^{m+1}\in X_h$ to be the solution of the following variational\\ formulation : $\forall  v \in X_h$, $u^{m+1}$ satisfies

\begin{equation}\label{AHV}
\frac{1}{\rho}(\nabla(u^{m+1} - u^{m}),\nabla v ) +\nu(\nabla u^{m},\nabla v ) +  b(u^m;u^{m+1}, v ) -       (\nab\cdot v , p^m)=({f}, v ),
\end{equation}

and $p^{m+1}\in Q_h$ to be the solution of the following variational formulation : $\forall q\in Q_h$, $p^{m+1}$ satisfies

\begin{equation}\label{AHP}
\alpha((p^{m+1} - p^{m}),q) + \rho(\nab\cdot u^{m+1}, q)=0.
\end{equation}\\

\end{enumerate}
\end{algorithm}
\end{mathframed}

\begin{remark}
The well-posedness and convergence of the AH scheme was shown in \cite{T77}, under some assumptions on the parameter choices for $\rho$ and $\alpha$.  Under similar choices
and additional data restrictions beyond the small data condition, the AH method was shown to be contractive in \cite{CHS17}.  While contractive, the linear convergence rate is rather hard to decipher from the analysis, which is rather technical (although still an important step forward).  Indeed the rate could be very close to 1, and in the computations with the AH {\color{black}method} in \cite{CHS17}, it appears {\color{black}that} it often is.
\end{remark}

\section{Convergence analysis of a grad-div stabilized AH method}

In this section we consider the following grad-div stabilized AH method, and will show that it has improved convergence properties over the usual AH method.

\begin{mathframed}
%\smallskip\\
\begin{algorithm} \label{Alg.3}
%{\bf Algorithm 2 :}\label{Alg.3}
Let $\rho , \alpha > 0$ be parameters, then
\begin{enumerate}
\item Let $u^0\in X_h$ and $p^0\in{\color{black}{Q}_h}$ be the solution of the Stokes problem \eqref{Eq:CTSS}.

\item For $m \geq 0 $, we define $u^{m+1}\in X_h$ to be the solution of: $\forall  v \in X_h$, $u^{m+1}$ satisfies
\begin{multline}\label{ah1}
\frac{1}{\rho}(\nabla(u^{m+1} - u^{m}),\nabla v ) +\nu(\nabla u^{m},\nabla v ) +  b(u^m;u^{m+1}, v )\\
 + \gamma (\nab\cdot u^{m+1}, \nab\cdot v ) -  (\nab\cdot v , p^m)=({f}, v )
\end{multline}
where the grad-div parameter $\gamma>0$ is a user selected parameter, and $p^{m+1}\in Q_h$ to be the solution of: $\forall q\in Q_h$, $p^{m+1}$ satisfies

\begin{equation}\label{ah2}
\alpha((p^{m+1} - p^{m}),q) + \rho(\nabla\cdot u^{m+1}, q)=0.
\end{equation}\\

\end{enumerate}
\end{algorithm}
\end{mathframed}

We show in this section how grad-div stabiliz{\color{black}ation} can provide improved convergence for AH {\color{black}}.  First we show linear convergence through a connection to the classical iterated Picard penalty method, and then we show the classical convergence analysis with the grad-div term included.  Throughout this section $u$ and $p$ represent the solution of \eqref{Eq:FEM_VP}, and we assume the small data condition $\kappa<1$ {\color{black}holds}.

\subsection{Linear convergence of the grad-div stabilized AH {\color{black}iteration} via a connection to the iterated Picard penalty method}

In this section we show that with certain discretizations, linear convergence for the grad-div stabilized AH {\color{black}iteration} can be established under a small data condition.  In particular, we consider
the case of velocity and pressure spaces satisfying both the inf-sup stability condition and $\nabla \cdot X_h=Q_h$.  For example, $(P_2,P_1^{disc})$ Scott-Vogelius elements on Alfeld splits  satisfy this property \cite{JLMNR17}.  We prove that with the right parameters, linear convergence {\color{black}that is} equivalent to that of the iterated {\color{black} Picard penalty (IPP)} method is achieved (which in practice is close to that of the well known Picard method \cite{GR86,C93,HS18,RVX21}).

The IPP method for the steady NSE is a classical method that has been well studied and extensively used \cite{G89It, HS18, RX15,C93}, and is defined {\color{black}\cite{C93}} by: Given $u_k \in X_h, \ p_k \in Q_h$, solve for $u_{k+1}\in X_h,\ p_{k+1}\in Q_h$ satisfying
\begin{align}
b^*(u^{m},{\color{black} u^{m+1}},v) - (p^{m+1},\nabla \cdot v) + \nu(\nabla u^{m+1},\nabla v) & = (f,v) \ \forall v\in X_h, \label{ns4} \\
\epsilon (p^{m+1},q) + (\nabla \cdot u^{m+1},q) & = \epsilon (p^m,q) \ \forall q\in Q_h, \label{ns5} 
\end{align}
where $\epsilon>0$ is a penalty parameter {\color{black}which is }generally taken small.  It is proven by Codina in \cite{C93} that if the penalty parameter 
$\epsilon < \frac{\nu \beta^2}{M^2}$ and {\color{black} small data condition $\kappa<1$ holds} then both $\| p^{m+1} - p \|$ and $\| \nabla( u^{m} - u) \|$ converge linearly to $0$, with rate at most 
\begin{equation}
\mbox{rate}_{IPP} = \left( \frac12 + \frac12 \left(1+\frac{2}{r} \right)^{1/2} \right) \left( \kappa + \epsilon \nu^{-1} \beta^{-2} (M C_1 + r \nu)^2 \right), \label{IPPrate}
\end{equation}
where 
\[
C_1 = \left(\nu^{-1} \| f\|_{-1} + \epsilon^{1/2} \nu^{-1/2} \left(\alpha \nu \beta^{-1} \| \nabla( u - u^0) \| + \| p - p^0 \| + \| p \| \right) \right) / \left(1- \epsilon^{1/2}\nu^{-1/2} \beta^{-1} M \right)
\]
and ${\color{black}r\geq 2}$ (the optimal $r\ge 2$ will depend on the other parameters).  While this is a complex expression,
a rate closer to $\kappa$ {\color{black}--}the convergence rate of the usual Picard iteration \cite{GR86} which is recovered when $\epsilon=0${\color{black}--} is typically observed.  However, even with larger penalty parameter such as $\epsilon=1$, IPP enhanced with Anderson acceleration can be very effective, even for larger data \cite{RVX21}.

%The grad-div stabilized AH scheme is given as follows.  Step 1 is as in Algorithm \ref{Alg.1} for the first step, and for all steps thereafter we find $u^{m+1}\in X_h$ and $p_{m+1}\in Q_h$ satisfying
%\begin{align}
%\frac{1}{\rho} (\nabla (u^{m+1} - u^m),\nabla v) + \nu (\nabla u^m,\nabla v) + b^*(u^m,u^{m+1},v) & \nonumber \\ + \gamma (\nabla \cdot u^{m+1},\nabla \cdot v) - (p^m , \nabla \cdot v) & = (f,v), \label{ah1} \\
%\alpha( p^{m+1} - p^m,q) + \rho (\nabla \cdot u^{m+1},q) & = 0, \label{ah2}
%\end{align}
%for all $v\in X_h,\ q\in Q_h$.

We now establish that if  $\gamma=\rho\alpha^{-1}$, $\rho=\nu^{-1}$ and $\alpha=\frac{\epsilon}{\nu}$, then the grad-div stabilized AH method is identical to IPP.   From \eqref{ah2}, since $\nabla \cdot X_h=Q_h$, we observe that $p^{m+1} \equiv p^m - \rho \alpha^{-1} \nabla \cdot u^{m+1} \equiv p^m- \gamma \nabla \cdot u^{m+1}$, by taking $\gamma=\rho\alpha^{-1}$.  Substituting into
\eqref{ah1}, we obtain
\[
\frac{1}{\rho} (\nabla (u^{m+1} - u^m),\nabla v) + \nu (\nabla u^m,\nabla v) + b^*(u^m ,u^{m+1},v) - ( p^{m+1},\nabla \cdot v)  = (f,v), 
\]
and so setting $\rho=\nu^{-1}$ yields
\[
 \nu (\nabla u^{m+1},\nabla v) + b^*(u^m ,u^{m+1},v) - ( p^{m+1},\nabla \cdot v)  = (f,v).
\]
Note that the grad-div stabilized AH momentum equation now {\color{black}exactly }matches {\color{black}with} the IPP momentum equation \eqref{ns4}.  Substituting $\rho=\nu^{-1}$ into \eqref{ah2} we obtain
\[
\nu\alpha(p^{m+1} - p^m,q) + (\nabla \cdot u^{m+1},q)=0.
\]
Finally setting $\alpha=\frac{\epsilon}{\nu}$, we recover \eqref{ns5}, thus {\color{black}establishment of} the grad-div stabilized AH and IPP {\color{black}methods} are equivalent when parameters are chosen so that $\gamma=\rho\alpha^{-1}$, $\rho=\nu^{-1}$ and $\alpha=\frac{\epsilon}{\nu}$.  {\color{black}With this} connection, we have proved the following theorem for grad-div stabilized AH {\color{black}method}.
\begin{theorem} \label{thm1}
Suppose grad-div stabilized AH is computed with parameters  $\rho=\nu^{-1}$, $\alpha=\frac{\epsilon}{\nu}$ and $\gamma=\rho\alpha^{-1} = \epsilon^{-1}$, with user selected penalty parameter $\epsilon < \frac{\nu \beta^2}{M^2}$.  Then, {\color{black}under the small data condition}  $\kappa<1$, grad-div stabilized AH {\color{black}method }converges linearly with {\color{black}a} rate at most rate$_{IPP}$ defined in \eqref{IPPrate}.
\end{theorem}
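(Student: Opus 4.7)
The plan is to reduce the theorem to a direct application of Codina's linear convergence theorem for the iterated Picard penalty (IPP) method by showing that, under the prescribed parameter choices, the grad-div stabilized AH iteration \eqref{ah1}--\eqref{ah2} produces exactly the same sequence of iterates as IPP \eqref{ns4}--\eqref{ns5}. Once the two schemes are identified, the stated rate is immediate from Codina's result, since the hypothesis $\epsilon < \nu \beta^2 / M^2$ and the small data assumption $\kappa < 1$ are precisely its hypotheses.

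To carry this out, I would first exploit the structural assumption $\nabla \cdot X_h = Q_h$. Because $\nabla \cdot u^{m+1} \in Q_h$ and $p^{m+1} - p^m \in Q_h$, testing \eqref{ah2} against all $q \in Q_h$ is equivalent to the pointwise identity $p^{m+1} = p^m - (\rho/\alpha)\,\nabla \cdot u^{m+1}$ in $Q_h$, which with $\gamma = \rho\alpha^{-1}$ rearranges as $p^m = p^{m+1} + \gamma\,\nabla \cdot u^{m+1}$. Substituting this expression for $p^m$ into the momentum equation \eqref{ah1} cancels the grad-div term $\gamma(\nabla \cdot u^{m+1},\nabla \cdot v)$ against $-(\nabla \cdot v, p^m)$, leaving only $-(\nabla \cdot v, p^{m+1})$. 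Then the choice $\rho = \nu^{-1}$ collapses the ``inertial'' part $(1/\rho)(\nabla(u^{m+1} - u^m),\nabla v) + \nu(\nabla u^m,\nabla v)$ to $\nu(\nabla u^{m+1},\nabla v)$, reproducing \eqref{ns4} verbatim. Substituting $\alpha = \epsilon/\nu$ into \eqref{ah2} delivers \eqref{ns5}. Thus the two iterations coincide iterate-for-iterate, and an appeal to Codina's theorem yields linear convergence of $\|\nabla(u^m - u)\|$ and $\|p^m - p\|$ at rate at most \eqref{IPPrate}.

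The only delicate step is the pointwise elimination of $p^{m+1}$ from \eqref{ah2}: it depends on both $\nabla \cdot u^{m+1}$ and the pressure increment lying in $Q_h$, so that the variational relation against all $q \in Q_h$ forces the strong identity, and this in turn is precisely why the hypothesis $\nabla \cdot X_h = Q_h$ (satisfied, for instance, by $(P_2,P_1^{\rm disc})$ Scott-Vogelius on Alfeld splits) must be imposed. With that identification in hand, the remainder of the proof is routine algebra, and there is no further analytical obstacle.
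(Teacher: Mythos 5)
Your proposal is correct and follows essentially the same route as the paper: use $\nabla\cdot X_h = Q_h$ to turn \eqref{ah2} into the pointwise identity $p^{m+1} = p^m - \rho\alpha^{-1}\nabla\cdot u^{m+1}$, substitute into \eqref{ah1} so that with $\gamma=\rho\alpha^{-1}$ the grad-div term and the $p^m$ pressure term combine into $-(\nabla\cdot v, p^{m+1})$, then use $\rho=\nu^{-1}$ and $\alpha=\epsilon/\nu$ to recover \eqref{ns4}--\eqref{ns5} exactly, and invoke Codina's IPP convergence theorem. The only cosmetic difference is the direction of the substitution (you eliminate $p^m$ where the paper eliminates the grad-div term in favor of $p^{m+1}$), and you correctly flag that the whole argument hinges on the standing assumption $\nabla\cdot X_h = Q_h$ made at the start of that subsection.
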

\begin{remark}
Just as with IPP,  the convergence rate of $\kappa$ will typically be observed in practice {\color{black}for sufficiently small $\epsilon$.}  Moreover, with AA, larger penalty parameters such as $\epsilon=1$ can be used and yield a very efficient and effective iteration even for $\kappa>1$ as shown in \cite{RVX21}.
\end{remark}
\begin{remark}
While we do not prove {\color{black}it in our current manuscript}, we expect {\color{black}that the} parameters `near' those in the theorem will still provide {\color{black}a} linear convergence by continuity.   We {\color{black}believe that it can be proved by using the similar  argument followed in \cite{C93}, however the additional terms will create a quite challenging theory } that is 
beyond the scope of this paper.  Our numerical tests show that grad-div stabilized AH is effective with a rather wide range of parameter choices, especially for smaller $\epsilon$ and with AA. 
\end{remark}

\subsection{Improvement to classical analysis of AH {\color{black}method} via grad-div stabilization}

We now consider the improvements to the classical convergence arguments for AH {\color{black}method}, without assuming certain choices of finite elements or meshes other than $X_h \times Q_h$ {\color{black}which} satisfy the inf-sup condition.  Begin the analysis by adding and subtracting the true solution {\color{black}$(u,p)$} from \eqref{ah1} - \eqref{ah2}, and then subtracting \eqref{Eq:FEM_VP} yields

\begin{multline}\label{AHVGD1}
	\frac{1}{\rho}(\nabla(u^{m+1} -u + u - u^{m}),\nabla v ) +\nu(\nabla(u^{m} - u),\nabla v ) +  b^*(u^m,u^{m+1}, v )\\
	- b^*(u,u, v ) + \gamma (\nab\cdot(u^{m+1} - u), \nab\cdot v )-(\nab\cdot v , p^m - p)=0,
\end{multline}

\color{black}
and
\begin{equation}\label{AHPGD1}
\alpha((p^{m+1} - p + p - p^{m}),q) + \rho(\nab\cdot(u^{m+1} - u + u), q)=0,
\end{equation}
respectively. 

Denote $e^{m+1} = u^{m+1} - u$ and $e_p^{m+1} = p^{m+1} - p$. Taking $ v  = e^{m+1}$ and $q = e_p^{m+1}$ in \eqref{AHVGD1}-\eqref{AHPGD1} respectively gives us
\begin{multline}\label{AHVGD2}
\frac{1}{\rho}(\nabla(e^{m+1} - e^{m}),\nabla e^{m+1}) +\nu(\nabla e^{m},\nabla e^{m+1}) +  b^*(u^m,u^{m+1},e^{m+1})\\
- b^*(u, u ,e^{m+1}) + \gamma (\nab\cdot e^{m+1}, \nab\cdot e^{m+1})-(\nab\cdot e^{m+1}, e_p^m)=0,
\end{multline}
\begin{equation}\label{AHPGD2}
\alpha((e_p^{m+1} - e_p^{m}),e_{\color{black}p}^{m+1}) + \rho(\nab\cdot e^{m+1}, e_p^{m+1})=0.
\end{equation}
%\begin{equation}\label{AHPGD2}
%	\alpha((e^{m+1} - e^{m}),q) + \rho(\nab\cdote^{m+1}, q)=0,
%\end{equation}
\color{black}
Since $b^*$ is skew-symmetric with respect to its last two arguments, 
\begin{align*}
b^*(u^m,u^{m+1},e^{m+1}) - b^*(u,u,e^{m+1}) &= b^*(u^m,u^{m+1},u^{m+1}) - b^*(u^m,u^{m+1},u) - b^*(u,u,u^{m+1}),\\
&= b^*(e^m,u,u^{m+1}),\\
&= b^*(e^m,u,u^{m+1}) - b^*(e^m,u,u),\\
&= b^*(e^m,u,e^{m+1}).
\end{align*} 
Next adding \eqref{AHVGD2}-\eqref{AHPGD2}, using the polarization identity, rearranging and simplifying {\color{black}the} terms, we get 
\begin{multline}\label{AHVGD3}
\frac{1}{2\rho}(\|\nabla(e^{m+1} - e^{m})\|^2 + \|\nabla e^{m+1}\|^2) +\nu\|\nabla e^{m+1}\|^2 + \gamma \|\nab\cdot e^{m+1}\|^2  + \frac{\alpha}{2\rho}(\|e_p^{m+1} - e_p^m \|^2 + \|e_p^{m+1}\|^2)\\
= \frac{1}{2\rho}\|\nabla e^{m}\|^2 + \frac{\alpha}{2\rho}\|e_p^m\|^2 + \nu(\nabla(e^{m+1} - e^{m}),\nabla e^{m+1}) - b^*(e^m,u,e^{m+1}) + (\nabla\cdot e^{m+1}, e_p^{m} - e_p^{m+1}).
\end{multline}
To bound {\color{black}the nonlinear term on the right hand side of last relation,} we use \eqref{skew2} and \eqref{nsebound}:
\[
| - b^*(e^m,u,e^{m+1})  | \le M \| \nabla e^m \| \| \nabla u \| \| \nabla e^{m+1} \| \le M\nu^{-1} \| f \|_{-1}  \| \nabla e^m \|  \| \nabla e^{m+1} \| = \kappa \nu  \| \nabla e^m \|  \| \nabla e^{m+1} \|.
\]
For the {\color{black} third and the last terms} in \eqref{AHVGD3}, we utilize Cauchy-Schwarz and Young's inequalities via
\begin{multline}\label{AHVGD4}
	\nu(\nabla(e^{m+1} - e^{m}),\nabla e^{m+1})  + (\nabla\cdot e^{m+1}, e_p^{m} - e_p^{m+1})\\
	\leq \frac{1}{4\rho}\|\nabla(e^{m+1} - e^{m})\|^2 + \rho\nu^2 \|\nabla e^{m+1}\|^2 
	+ \frac{\alpha}{4\rho}\|e_p^{m} - e_p^{m+1} \|^2 + \frac{\rho}{\alpha}\|\nabla\cdot e^{m+1}\|^2.
\end{multline}
Using {\color{black}Young's and the triangle inequalities} provides
\[
\kappa \nu \|\nabla e^{m}\| \|\nabla e^{m+1}\| \le \kappa\nu(\|\nabla(e^{m+1} - e^{m})\| + \|  \nabla e^{m+1} \|)\|\nabla e^{m+1}\| \le 
 \frac{\kappa\nu}{2}\|\nabla(e^{m+1} - e^{m})\|^2 + \frac{3\kappa\nu}{2}\|\nabla e^{m+1}\|^2,
\]
and now combining the above bounds, we obtain
\begin{multline}\label{AHVGD6}
	\left(\frac{1}{2\rho} - \frac{\kappa\nu}{2}\right)\|\nabla(e^{m+1} - e^{m})\|^2 + \left(\frac{1}{2\rho}  + \nu - \rho\nu^2- \frac{3\kappa\nu}{2}\right)\|\nabla e^{m+1}\|^2\\ + \left(\gamma-\frac{\rho}{\alpha}\right)\|\nabla\cdot e^{m+1}\|^2+ \frac{\alpha}{2\rho}\| e_p^{m+1} \|^2 +\frac{\alpha}{4\rho}\|e_p^{m} - e_p^{m+1} \|^2 
	\leq \frac{1}{2\rho}\|\nabla e^{m}\|^2 + \frac{\alpha}{2\rho}\| e_p^{m} \|^2.
\end{multline}

Provided {\color{black}that} $\rho \le \nu^{-1} \max \left\{ \kappa^{-1},1 \right\}$ and $\gamma\ge \frac{\rho}{\alpha}$ along {\color{black}with} the additional small data assumption $\kappa<\frac23$, the estimate \eqref{AHVGD6} is 
sufficient to provide convergence of the grad-div stabilized AH method.  Comparing to analysis without the grad-div term from \cite{CHS17}, {\color{black} we observe that with grad-div the coefficient of the left hand side term $\| \nabla e^{m+1} \|^2$ is larger and there are less restrictions on the parameters (including no restriction now on $\alpha$).}  The key difference arises from utilizing the
left hand side term $\| \nabla \cdot e^{m+1} \|$, allowing for a larger coefficient of $\| \nabla \cdot e^{m+1} \|$.  Since this is not a proof of contraction, it offers less of a comparison of rates than the previous section did. {\color{black}Note that} if $\nabla \cdot X_h \subset Q_h$ then we can follow the proof of \cite{CHS17} to prove a contraction, however these results would be similar to that of \cite{CHS17} and not nearly as strong as what is proven above in Theorem \ref{thm1}.

\section{Anderson Acceleration applied to the grad-div stabilized AH Method}

In this section, we show that the Anderson acceleration (AA) method can be applied to the grad-div stabilized AH method (Algorithm \ref{Alg.3}) and will improve its linear convergence rate.  We begin this section with a review of AA and recent theoretical results. We will proceed to show how the grad-div stabilized AH method fits into this framework, which in turn allows for invoking the AA theory.  Throughout this section, we assume that the data is sufficiently small and parameters are chosen so that grad-div stabilized AH provides a contractive iteration; we specify this assumption below precisely, after we give some notation.

\subsection{Anderson acceleration}

%{\color{red} Leo says: Use $Y$ instead of X, with $Y$ and Hilbert space.  Put in the PR21 theorem and assumptions.  We will assume contractive, so that assumption will be satisfied.  We already show above that the method is contractive with certain parameter choices.  This means all that is left to show is the smoothness.  So in this subsection state the same stuff as in Bingham paper about the AA theory, and then in the next subsection we verify it.}\\

In this subsection, we provide AA procedure and its convergence properties. Consider a fixed-point operator $g:Y\rightarrow Y$ where Y is a Hilbert space equipped with induced norm $\|\cdot\|_Y$, and  denote $w_{j} = g(x_{j-1}) - x_{j-1}$ as the nonlinear residual, also sometimes is called the update step. Then, the AA algorithm with depth $m$ (if $m=0$, it returns to usual Picard iteration) applied to the fixed-point problem $ g(x) = x $, reads as follows.

\begin{algorithm} \label{alg:anderson}
	(Anderson acceleration with depth $m$ and damping factors $\beta_k$)\\ 
	Step 0: Choose $x_0\in Y.$\\
	Step 1: Find $w_1\in Y $ such that $w_1 = g(x_0) - x_0$.  
	Set $x_1 = x_0 + w_1$. \\
	Step $k$: For $k=2,3,\ldots$ Set $m_k = \min\{ k-1, m\}.$\\
	\indent [a.] Find $w_{k} = g(x_{k-1})-x_{k-1}$. \\
	\indent [b.] Solve the minimization problem for the Anderson coefficients $\{ \alpha_{j}^{k}\}_{k-m_k}^{k-1}$
	\begin{align}\label{eqn:opt-v0}
		\{ \alpha_{j}^{k}\}_{k-m_k}^{k-1}=\textstyle \text{argmin} 
		\left\| \left(1- \sum\limits_{j=k-m_k}^{k-1} \alpha_j^{k} \right) w_k + \sum\limits_{j = k-m_k}^{k-1}  \alpha_j^{k}  w_{k-j} \right\|_Y.
	\end{align}
	\indent [c.] For damping factor $0 < \beta_k \le 1$, set
	\begin{align}\label{eqn:update-v0}
		\textstyle
		x_{k} 
		= (1-\sum\limits_{j = k-m_k}^{k-1}\alpha_j^k) x_{k-1} + \sum_{j= k-m_k}^{k-1} \alpha_j^{k} x_{j-1}
		+ \beta_k \left(  (1- \sum\limits_{j= k-m_k}^{k-1} \alpha_j^{k}) w_k + \sum\limits_{j=k-m_k}^{k-1}\alpha_j^k w_{k-j}\right).
	\end{align}
\end{algorithm}
To understand how AA improves convergence, we define the optimization gain factor $\theta_k$ by
 \begin{align*}
\theta_{k}=\frac{\nr{ (1- \sum\limits_{j= k-m_k}^{k-1} \alpha_j^{k}) w_k + \sum\limits_{j=k-m_k}^{k-1}\alpha_j^k w_{k-j}}_Y}{\|w_k\|_Y},
 \end{align*}
which characterizes the improvement in fixed-point convergence rate as proposed in \cite{PR21, PRX19}. 

The following assumptions from \cite{PR21} provide sufficient conditions on the fixed point operator $g$ for the convergence and acceleration results.
\begin{assumption}\label{assume:g}
	Assume $g\in C^1(Y)$ has a fixed point $x^\ast$ in $Y$, and there are positive constants $C_0$ and $C_1$ with
	\begin{enumerate}
		\item $\|g'(x)\|_{Y}\le C_0$ for all $x\in Y$, and 
		\item $\|g'(x) - g'(y)\|_{Y} \le C_1 \|x-y\|_{Y}$
		for all $x,y \in Y$.
	\end{enumerate}
\end{assumption}
\begin{assumption}\label{assume:fg} 
	Assume there is a constant $\sigma> 0$ for which the differences between consecutive
	residuals and iterates satisfy
	\begin{align} \label{eqn:assumefg}
		\| w_{{k}+1} - w_{k}\|_Y  \ge \sigma \| x_{k} - x_{{k}-1} \|_Y, \quad {k} \ge 1.
	\end{align}
\end{assumption}
Assumption \ref{assume:g} will be verified for Picard fixed-point operator grad-div stabilized AH method in next sections. Also, Assumption \ref{assume:fg} can be verified easily for this method which is contractive under small data and particular parameter choices. Under Assumptions \ref{assume:g} and \ref{assume:fg}, the following result from \cite{PR21}, generates a bound on the residual $\|w_{k+1}\|$ in terms of the previous residual $\|w_k\|$.

\begin{theorem}[Pollock et al., 2021]  \label{thm:genm}
	Let Assumptions \ref{assume:g} and \ref{assume:fg} hold,
	and suppose the direction sines between each column $j$ of matrix
	\begin{align*}
	F_j&:=\left( (w_{j}-w_{j-1})(w_{j-1}-w_{j-2})\ \dotsc\ (w_{j-m_j+1}-w_{j-m_j})  \right)
\end{align*}
	and the subspace spanned by the preceeding columns satisfy
	$|\sin(f_{j,i},\text{span }\{f_{j,1}, \ldots, f_{j,i-1}\})| \ge c_s >0$,
	for $j = k-m_k, \ldots, k-1$.
	Then the residual $w_{k+1} = g(x_k)-x_k$ from Algorithm \ref{alg:anderson} 
	(depth $m$) satisfies the following bound.
	\begin{align}\label{eqn:genm}
		\nr{w_{k+1}} & \le \nr{w_k} \Bigg(
		\theta_k ((1-\beta_{k}) + C_0 \beta_{k})
		+ \frac{C C_1\sqrt{1-\theta_k^2}}{2}\bigg(
		\nr{w_{k}}h(\theta_{k})
		\nonumber \\ &
		+ 2  \sum_{n = k-{m_{k}}+1}^{k-1} (k-n)\nr{w_n}h(\theta_n) 
		+ m_{k}\nr{w_{k-m_{k}}}h(\theta_{k-m_{k}})
		\bigg) \Bigg),
	\end{align}
	where  each $h(\theta_j) \le C \sqrt{1 - \theta_j^2} + \beta_{j}\theta_j$,
	and $C$ depends on $c_s$ and the implied upper bound on the direction cosines. 
	%\textcolor{red}{(i.e. $\sin(w_{j+1}, w_j) = \sqrt{1- \cos^2(w_{j+1}, w_j)}$ and $\cos(w_{j+1}, w_j) = \frac{(w_{j+1}, w_j)}{\|  w_{j+1}\| \| w_j\| }$)}.  
\end{theorem}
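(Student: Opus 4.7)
The plan is to expand the residual $w_{k+1} = g(x_k) - x_k$ via Taylor series and identify the dominant linear contribution with the quantity the minimization in step [b.] of Algorithm \ref{alg:anderson} controls. First I would rewrite the update (\ref{eqn:update-v0}) in the compact form $x_k = \bar x_k + \beta_k \bar w_k$, where
$\bar x_k := (1-\sum_j \alpha_j^k) x_{k-1} + \sum_j \alpha_j^k x_{j-1}$ and $\bar w_k := (1-\sum_j \alpha_j^k) w_k + \sum_j \alpha_j^k w_{k-j}$. By definition of $\theta_k$, $\|\bar w_k\|_Y = \theta_k \|w_k\|_Y$, so the cost of the iteration is already pre-encoded in $\bar w_k$.

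Writing $w_{k+1} = g(\bar x_k + \beta_k \bar w_k) - \bar x_k - \beta_k \bar w_k$, I would perform two Taylor expansions. First, $g(\bar x_k + \beta_k \bar w_k) = g(\bar x_k) + \beta_k g'(\bar x_k) \bar w_k + R_1$ with $\|R_1\|_Y \le \tfrac{C_1}{2}\beta_k^2 \|\bar w_k\|_Y^2$ from part 2 of Assumption \ref{assume:g}. Second, Taylor-expanding each $g(x_{j-1})$ about $x_{k-1}$ and using the identity $w_j = g(x_{j-1}) - x_{j-1}$ lets me replace $g(\bar x_k) - \bar x_k$ by the affine combination $\bar w_k$ plus quadratic remainders bounded by $\tfrac{C_1}{2}\sum_j |\alpha_j^k|\,\|x_{j-1} - x_{k-1}\|_Y^2$. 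The linear part then collects to $(1-\beta_k)\bar w_k + \beta_k g'(\bar x_k)\bar w_k$, whose norm is at most $((1-\beta_k) + C_0 \beta_k)\theta_k \|w_k\|_Y$ using part 1 of Assumption \ref{assume:g}. This produces precisely the leading factor in (\ref{eqn:genm}).

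The third and most delicate step converts the quadratic remainders into the stated sum $\sum_n (k-n)\|w_n\|h(\theta_n)$. I would first derive a one-step bound $\|x_n - x_{n-1}\|_Y \le \|w_n\|_Y h(\theta_n)$ with $h(\theta_n) \le C\sqrt{1-\theta_n^2} + \beta_n\theta_n$, directly from (\ref{eqn:update-v0}) combined with $\|\bar w_n\|_Y = \theta_n \|w_n\|_Y$. Telescoping $x_{j-1} - x_{k-1} = -\sum_{n=j}^{k-1}(x_n - x_{n-1})$ and applying Cauchy--Schwarz then yields the arithmetic weights $(k-n)$ in front of each $\|w_n\|h(\theta_n)$, together with the boundary contribution at $n = k-m_k$. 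The main obstacle --- and the reason the direction-sine hypothesis is imposed --- is that every step above requires a uniform bound $|\alpha_j^k| \le C(c_s)$ on the least-squares coefficients. The lower bound $c_s$ on the sines of the principal angles between each column of $F_j$ and the span of its predecessors forces the Gram matrix of the residual differences to be uniformly well-conditioned, so that the normal equations for (\ref{eqn:opt-v0}) return coefficients of controlled size. Without this the quadratic remainders would depend exponentially rather than linearly on the depth $m$, and the recursion would not close. Assumption \ref{assume:fg} then furnishes the auxiliary bridge between iterate and residual differences needed to match the statement in (\ref{eqn:genm}) exactly.
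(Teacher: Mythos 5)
A preliminary but important observation: the paper does not prove Theorem \ref{thm:genm} at all --- it is imported, with attribution, from \cite{PR21}, and the surrounding text only verifies that the grad-div stabilized AH operator satisfies Assumptions \ref{assume:g} and \ref{assume:fg} so that the cited bound applies. There is therefore no in-paper proof to compare against; your attempt has to be judged against the argument in \cite{PR21} itself.

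Measured against that source, your sketch reproduces the architecture of the actual proof faithfully: the splitting $x_k=\bar x_k+\beta_k\bar w_k$, the identity $\|\bar w_k\|_Y=\theta_k\|w_k\|_Y$ coming from the definition of the gain, the two Taylor expansions (one in the damping direction, one relating $g(\bar x_k)-\bar x_k$ to the affine combination of stored residuals) that produce the leading factor $\theta_k\bigl((1-\beta_k)+C_0\beta_k\bigr)$, the one-step estimate $\|x_n-x_{n-1}\|_Y\le\|w_n\|_Y\,h(\theta_n)$, and the telescoping that generates the arithmetic weights $(k-n)$ are all the right ingredients, in essentially the order they appear in \cite{PR21}. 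The one place where your outline is not quite strong enough to close the argument is the coefficient bound. A uniform estimate $|\alpha_j^k|\le C(c_s)$ would control the quadratic remainders, but it would not produce the factor $\sqrt{1-\theta_k^2}$ that multiplies the \emph{entire} higher-order sum in \eqref{eqn:genm}. That factor comes from an extra orthogonality step: the minimizer of \eqref{eqn:opt-v0} makes the optimized residual $\bar w_k$ orthogonal to the span of the columns of the difference matrix, so by Pythagoras the projected component of $w_k$ has norm exactly $\sqrt{1-\theta_k^2}\,\|w_k\|_Y$, and it is this quantity --- inverted through the direction-sine lower bound $c_s$ on the column space --- that bounds the coefficient vector by $C(c_s)\sqrt{1-\theta_k^2}$. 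Without it, the higher-order terms would not be damped when the optimization is ineffective ($\theta_k$ near $1$), and the trade-off between first-order gain and higher-order growth that the theorem quantifies (and that the paper's discussion after \eqref{eqn:genm} emphasizes) would be lost.
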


In \eqref{eqn:genm}, the optimization gain $\theta_k$ is scaling the first-order term, which is residual in the standard fixed-point iteration. On the other hand, the higher-order terms are scaled by a factor of $\sqrt{1-\theta_k^2}$, which implies if the optimization works, the relative weight of the higher-order terms increase, otherwise the relative weight of the first-order term increase in \eqref{eqn:genm}.

%Let $G : X \to X$ be a contractive mapping, that is, there is $0 < r < 1$ such that
%
%\begin{align*}
%\|G(x) - G(y) \|_{X} \leq \| x - y\|_{X},
%\end{align*}
%
%where $\| . \|_{X}$ is the norm defined on $X$.
%
% The Anderson acceleration method \cite{AAM} is an algorithm to speed up the convergence in fixed point iterations. We describe the Anderson acceleration method with depth $m$ in the following algorithm
%
%
%\begin{mathframed}
%\smallskip\\
%{\bf Anderson acceleration}\label{Alg.2}
%\begin{enumerate}
%\item Step 1 : Choose $u_0\in X$.
%
%\item Step 2 :  Find $\tilde{u}_1 \in X$ such that $\tilde{u}_1 = G(u_0)$. Set $u_1 = \tilde{u}_1$.
%
%
%\item Step $k$ :  For $k+1 = 1, 2, 3, ...$ Set $m_k = \min\{k, m\}$. Find $\tilde{u}_{k+1} = G(u_k)$. Then, solve the minimization problem for $\{\alpha_j^{k+1}\}_{k - m_k}^k$
%
%\begin{align*}
%\min_{\sum_{j = k - m_k}^k{\alpha_j^{k+1} = 1}}{\norm{\sum_{j = k - m_k}^k{\alpha_j^{k+1}(\tilde{u}_{j+1} - u_j)}}_X},
%\end{align*}
% then set 
%\begin{align*}
%u_{k+1} = \sum_{j = k - m_k}^k{\alpha_j^{k+1}\tilde{u}_{j+1}}.
%\end{align*}
%\end{enumerate}
%\end{mathframed}

%%%%%%%%%%%%%%%%%%%%%%%%%%%%%%%%%%
%%%%%%%%%%%%%%%%%%%%%%%%%%%%%%%%%%
%%%%%%%%%%%%%%%%%%%%%%%%%%%%%%%%%%
%%%%%%%%%%%%%%%%%%%%%%%%%%%%%%%%%%

\subsection{Grad-div stabilized AH method as a fixed point iteration}

%{\color{red} Put here definition of $G$, that $G$ is well defined (Lax Milgram), and that $G$ is Lipschitz with constant less than 1 by assumption (but we show in section 3 how that assumption can be satisfied.}

In this subsection, we define the fixed-point operator $G$ which is associated with grad-div stabilized AH iteration.  Note that in this section, $u,p$ are generic functions and are not the steady NSE solution as in the previous section. 
	
\begin{definition}
Define mapping $G:(X_h, Q_h)\rightarrow (X_h, Q_h) $, $G(u,p)=(G_1(u,p),G_2(u,p))$ such that for any $(v,q)\in(X_h, Q_h)$
\begin{align}
	\rho^{-1}(\nabla(G_1(u,p)-u),\nabla v) + \nu(\nabla u, \nabla v) + b(u;G_1(u,p),v) \nonumber\\+ \gamma(\nabla\cdot G_1(u,p), \nabla\cdot v)
	- (p, \nabla\cdot v)&= (f,v),\label{AHwithG}\\
	\alpha(G_2(u,p)-p,q) + \rho (\nabla\cdot G_1(u,p),q)&=0.\label{AHwithG1}    
\end{align} 
\end{definition}
Now, we show that $G$ is well-defined and bounded with respect to the norm $\|( v ,q)\|_H\eqqcolon \sqrt{\|\nabla  v \|^2 + \alpha \|q\|^2}$ on $ X_h \times Q_h$.
\begin{lemma}\label{lemma:Gwelldefined}The operator $ G $ is well-defined. Moreover, if $G(u,p)=(G_1(u,p),G_2(u,p))$ is the solution of \eqref{AHwithG}-\eqref{AHwithG1}, then the following inequality holds
	\begin{align}\label{Gwelldefined}
		\|G(u,p)\|_H
		\leq 
		\sqrt{\frac{(1+4\rho^2\nu^2 )}{N}}\|\nabla u\|
		+\sqrt{ \frac{\alpha}{N}}\|p\|
		+\sqrt{  \frac{4\rho^2}{N}} \|f\|_{-1},
	\end{align}
	where $N\coloneqq \min\{  \frac{1}{2}-\alpha^{-1}\rho^2,1 \}$.
\end{lemma}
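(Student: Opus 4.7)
The plan is a Lax--Milgram argument for existence and uniqueness of $G=(G_1,G_2)$, followed by an energy estimate with carefully tuned Young's inequalities to extract the stated constants.

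\textbf{Well-definedness.} I would first read \eqref{AHwithG} as a linear problem for $G_1(u,p) \in X_h$ with bilinear form
\[ a(w,v) = \rho^{-1}(\nabla w,\nabla v) + b(u;w,v) + \gamma(\nabla\cdot w,\nabla\cdot v). \]
Skew-symmetry of $b$ in its last two arguments gives $a(w,w) \geq \rho^{-1}\|\nabla w\|^2$, so $a$ is coercive on $X_h$; continuity of $a$ and of the right-hand side are immediate in this finite-dimensional setting. Lax--Milgram then yields a unique $G_1(u,p) \in X_h$, after which \eqref{AHwithG1} determines $G_2(u,p) \in Q_h$ uniquely by Riesz representation on $Q_h$.

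\textbf{The a priori bound.} I would take $v = G_1$ in \eqref{AHwithG}, multiply by $\rho$, and add the result to \eqref{AHwithG1} tested with $q = G_2$. Skew-symmetry eliminates the trilinear term and yields
\begin{multline*}
\|\nabla G_1\|^2 + \rho\gamma\|\nabla\cdot G_1\|^2 + \alpha\|G_2\|^2 \\
= \rho(f,G_1) + (1-\rho\nu)(\nabla u,\nabla G_1) + \rho(p,\nabla\cdot G_1) + \alpha(p,G_2) - \rho(\nabla\cdot G_1, G_2).
\end{multline*}
I then bound each right-hand term by Young's inequality, choosing weights so that after rearrangement the coefficients match the statement: $4\rho^2$ on $\|f\|_{-1}^2$, $(1+4\rho^2\nu^2)$ on $\|\nabla u\|^2$ (obtained by splitting $(1-\rho\nu)$ additively and weighting each piece separately), $\alpha$ on $\|p\|^2$ (split evenly between $\rho(p,\nabla\cdot G_1)$ and $\alpha(p,G_2)$), and a remaining $(1/2 - \alpha^{-1}\rho^2)\alpha\|G_2\|^2$ left on the LHS. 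The $\|\nabla\cdot G_1\|^2$ contributions from Young's inequality are absorbed into $\rho\gamma\|\nabla\cdot G_1\|^2$, which is non-negative under the contractivity hypothesis on $\gamma$ already in force in this section. Collecting terms gives
\[ N\bigl(\|\nabla G_1\|^2 + \alpha\|G_2\|^2\bigr) \leq (1+4\rho^2\nu^2)\|\nabla u\|^2 + \alpha\|p\|^2 + 4\rho^2\|f\|_{-1}^2, \]
and \eqref{Gwelldefined} then follows by dividing by $N$, taking square roots, and applying $\sqrt{a+b+c}\leq\sqrt{a}+\sqrt{b}+\sqrt{c}$.

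\textbf{Main obstacle.} The nontrivial bookkeeping is choosing the Young's weight on the cross term $-\rho(\nabla\cdot G_1, G_2)$ so that $N$ takes the precise form $1/2 - \alpha^{-1}\rho^2$. A generic balance would either leave no $\alpha\|G_2\|^2$ on the left or inflate the $\|p\|^2$ coefficient beyond $\alpha$; the weight must be tied to $\alpha^{-1}\rho^2$ so that the contribution to the $\|G_2\|^2$ coefficient on the right, combined with the contribution from $\alpha(p,G_2)$, leaves precisely $N\alpha$ on the left. A secondary subtlety is the $(1-\rho\nu)(\nabla u,\nabla G_1)$ term: bounding through $|1-\rho\nu|$ and then using $(1-\rho\nu)^2 \leq 2(1+\rho^2\nu^2)$ is too lossy, so one has to split additively and Young-bound each piece with its own weight to hit exactly $1+4\rho^2\nu^2$.
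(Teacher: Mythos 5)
Your well-posedness argument (Lax--Milgram for $G_1$, using skew-symmetry of $b$ for coercivity, then Riesz representation on $Q_h$ for $G_2$) is sound and is a legitimate alternative to the paper's route, which instead derives the a priori bound first and then invokes linearity and finite-dimensionality (bounded by data $\Rightarrow$ unique $\Rightarrow$ exists). The substantive divergence is in the energy estimate, and here there is a genuine gap. The paper tests with $v=G_1$, $q=G_2$ and applies the polarization identity to both $(\nabla(G_1-u),\nabla G_1)$ and $(G_2-p,G_2)$. This places $\frac12\|\nabla u\|^2$ and $\frac{\alpha}{2}\|p\|^2$ on the right with the needed coefficients at no cost, leaves the extra nonnegative terms $\frac12\|\nabla(G_1-u)\|^2$ and $\frac{\alpha}{2}\|G_2-p\|^2$ on the left, and, crucially, collapses the two pressure cross terms into the single term $-\rho(G_2-p,\nabla\cdot G_1)$, which is absorbed by $\alpha\|G_2-p\|^2$ and $\alpha^{-1}\rho^2\|\nabla G_1\|^2$. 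Hence the full $\alpha\|G_2\|^2$ survives on the left, only the $\|\nabla G_1\|^2$ coefficient degrades to $\frac12-\alpha^{-1}\rho^2=N$, and the term $\rho\gamma\|\nabla\cdot G_1\|^2$ is simply dropped, so no condition on $\gamma$ is needed.

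Your identity, taken without polarization, carries $\alpha(p,G_2)$ and $-\rho(\nabla\cdot G_1,G_2)$ as two separate cross terms that must both be paid out of the single $\alpha\|G_2\|^2$ on the left. With your even split of the $\|p\|^2$ budget, $\alpha(p,G_2)$ already costs $\frac{\alpha}{2}\|G_2\|^2$; to land on the coefficient $N\alpha$, the term $-\rho(\nabla\cdot G_1,G_2)$ may cost at most $\rho^2\|G_2\|^2$, which by Young forces a penalty of $\frac14\|\nabla\cdot G_1\|^2$, independent of all parameters. Absorbing that (together with the $\frac{\rho^2}{2\alpha}\|\nabla\cdot G_1\|^2$ from the $\rho(p,\nabla\cdot G_1)$ term) into $\rho\gamma\|\nabla\cdot G_1\|^2$ requires $\gamma\ge\frac{1}{4\rho}+\frac{\rho}{2\alpha}$, a lower bound neither stated in the lemma nor implied by the section's contractivity hypothesis: that only gives $\gamma\ge\rho/\alpha$, and since $N>0$ forces $\alpha^{-1}\rho^2<\frac12$, one has $\rho/\alpha<\frac{1}{4\rho}+\frac{\rho}{2\alpha}$. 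Absorbing it instead into $\|\nabla G_1\|^2$ via $\|\nabla\cdot G_1\|\le\|\nabla G_1\|$ leaves a coefficient of at most $\frac38-\frac{\rho^2}{2\alpha}$, which is strictly below $\frac12-\alpha^{-1}\rho^2$ whenever $\alpha^{-1}\rho^2<\frac14$, i.e.\ exactly in the regime where $N$ is largest. So your bookkeeping, as described, yields a weaker bound than \eqref{Gwelldefined} rather than the stated one; the missing idea is the polarization step, which supplies the absorbing term $\alpha\|G_2-p\|^2$ for free and keeps the estimate independent of $\gamma$.
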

\begin{proof}
	Assume that a solution exists. Then, choosing $ v =G_1(u,p)$ and $q=G_2(u,p)$ eliminates the nonlinear term and yields
	\begin{align*}
		\frac{1}{2} \left( \|\nabla G_1(u,p)\|^2 - \|\nabla u\|^2 + \|\nabla(G_1(u,p)-u)\|^2\right)
		+  \rho\gamma\|\nabla\cdot G_1(u,p)\|^2-\rho(p,\nabla\cdot G_1(u,p)) \\=   \rho(f,G_1(u,p))- \rho\nu(\nabla u, \nabla G_1(u,p)),\\
		\frac{\alpha}{2}\left( \|G_2(u,p)\|^2 - \|p\|^2 + \|G_2(u,p)-p\|^2 \right) + \rho (\nabla\cdot G_1(u,p),G_2(u,p))=0.
	\end{align*} 
thanks to the polarization identity. Then, combining the above equations gives that
	\begin{align*}
		&\frac{1}{2} \left( \|\nabla G_1(u,p)\|^2 + \|\nabla(G_1(u,p)-u)\|^2\right) 
		+\frac{\alpha}{2}\left( \|G_2(u,p)\|^2 + \|G_2(u,p)-p\|^2 \right)
		+\rho\gamma\|\nabla\cdot G_1(u,p)\|^2
		\\
		&= 
		\frac{1}{2} \|\nabla u\|^2
		+ \frac{\alpha}{2}\|p\|^2
		+ \rho(f,G_1(u,p))
		- \rho\nu(\nabla u, \nabla G_1(u,p))
		-\rho(G_2(u,p)-p,\nabla\cdot G_1(u,p)) .
	\end{align*}
	Multiplying the both sides of the last relation by $2$, dropping positive terms $\|\nabla(G_1(u,p)-u)\|^2$, $\rho\gamma\|\nabla\cdot G_1(u,p)\|^2$, and $\|G_2(u,p)-p\|^2$ on the left hand side, and using H\"older's and Young's inequalities produce
	\begin{align*}
		\left( \frac{1}{2}-\alpha^{-1}\rho^2 \right)\|\nabla G_1(u,p)\|^2 
		+\alpha \|G_2(u,p)\|^2 
		\leq 
		(1+4\rho^2\nu^2 )
		\|\nabla u\|^2
		+ \alpha\|p\|^2
		+ \rho^2 4 \|f\|^2_{-1}.
	\end{align*}
	Letting $N\coloneqq \min\{  \frac{1}{2}-\alpha^{-1}\rho^2,1 \}$, and dividing both sides by $N$, we get
	\begin{align*}
		\|G(u,p)\|^2
		%=
		%\|\nabla G_1(u,p)\|^2 
		%+\alpha \|G_2(u,p)\|^2 
		\leq 
		\frac{(1+4\rho^2\nu^2 )}{N}\|\nabla u\|^2
		+ \frac{\alpha}{N}\|p\|^2
		+ \frac{4\rho^2}{N} \|f\|^2_{-1} .
	\end{align*}
	Then, taking the square root of both sides reduces it to \eqref{Gwelldefined}. Since $G$ is linear and finite dimensional, showing that the solution $G(u,p)$ is bounded continuously by the data implies solution uniqueness and thus existence as well.

\end{proof}

We now rewrite the grad-div stabilized AH method in terms of a mapping $G:(X_h,Q_h)\rightarrow(X_h,Q_h)$ that satisfies for $m\geq 0$
\begin{equation*}
	G(u^m,p^{m})=(G_1(u^m,p^{m}),G_2(u^m,p^{m})) := (u^{m+1},p^{m+1}),
\end{equation*}
where $(u^m,p^{m})$ is the $m^{th}$ iteration of the A-H method described in Algorithm \ref{Alg.3}.

\subsection{Applying AA to the grad-div stabilized AH iteration}
%{\color{red} In this section put the extra smoothness stuff about $G$, and then invoke the PR21 theory.}

In this subsection, we show the sufficient smoothness properties of the associated fixed point operator $G$ for the grad-div stabilized AH iteration to apply AA theory. Now, we show Lipschitz continuity of $ G $.
\begin{lemma}\label{lemma:GLipschitz} For any $(u,p), (w,z)\in  (X_h, Q_h)$, we have
\begin{align}\label{GLipschitz}
	\|G(u,p)-G(w,z)\|_H \leq C_L \|(u,p)-(w,z)\|_H                                                                 
\end{align}
where
$C_L= \max \{ \sqrt{\frac{2}{K}}, \left(\frac{1-\rho\nu}{\sqrt{K}}+ \frac{\rho }{\sqrt{KN}}\left((1+2\rho\nu)\|\nabla u\|
+ \sqrt{\alpha}\|p\|
+ 2\rho\|f\|_{-1} \right)\right)  \}$.
\end{lemma}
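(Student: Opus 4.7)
The plan is to mimic the energy argument used in the proof of Lemma \ref{lemma:Gwelldefined}, but applied to the difference of two images rather than to $G$ itself. Writing $(U,P) := G(u,p)$, $(W,Z) := G(w,z)$, and $\delta u := u-w$, $\delta p := p-z$, $\delta U := U-W$, $\delta P := P-Z$, I would subtract the two instances of \eqref{AHwithG}--\eqref{AHwithG1} and exploit the bilinear identity
\[
b(u;U,v) - b(w;W,v) = b(\delta u;U,v) + b(w;\delta U,v),
\]
which follows from linearity of $b$ in its first two arguments. This yields a linear-in-$(\delta U,\delta P)$ system driven by $(\delta u,\delta p)$ and by the single nonlinear defect $b(\delta u;U,v)$.

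Next, I would test the difference velocity equation with $v=\delta U$ and the difference pressure equation with $q=\delta P$. Skew-symmetry \eqref{skew1} kills $b(w;\delta U,\delta U)$. Polarizing the two ``update'' inner products $\rho^{-1}(\nabla(\delta U-\delta u),\nabla\delta U)$ and $\alpha(\delta P-\delta p,\delta P)$, scaling the resulting momentum identity by $2\rho$ and the pressure identity by $2$, and adding, I obtain an equality of the schematic form
\[
\|\nabla\delta U\|^2 + \alpha\|\delta P\|^2 + (\text{nonnegative terms}) = \|\nabla\delta u\|^2 + \alpha\|\delta p\|^2 + R,
\]
where $R$ collects the cross terms $-2\rho\nu(\nabla\delta u,\nabla\delta U)$, $-2\rho\, b(\delta u;U,\delta U)$, and $-2\rho(\nabla\cdot\delta U,\delta P-\delta p)$.

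The remaining work is to control $R$. For the nonlinear piece I would apply \eqref{skew2} to get $|b(\delta u;U,\delta U)|\le M\|\nabla\delta u\|\|\nabla U\|\|\nabla\delta U\|$, and then substitute the data-dependent bound for $\|\nabla U\|\le\|G(u,p)\|_H$ supplied by Lemma \ref{lemma:Gwelldefined}; this is the mechanism by which the terms $\sqrt{(1+4\rho^2\nu^2)/N}\|\nabla u\|$, $\sqrt{\alpha/N}\|p\|$, and $2\rho/\sqrt{N}\|f\|_{-1}$ enter $C_L$, and also explains the appearance of the factor $1/\sqrt{N}$ there. The pressure cross term $2\rho(\nabla\cdot\delta U,\delta P-\delta p)$ I would rewrite by testing the subtracted pressure equation against $q=\delta P-\delta p$, yielding $\alpha\|\delta P-\delta p\|^2=-\rho(\nabla\cdot\delta U,\delta P-\delta p)$; this identity is what allows the $\sqrt{\alpha}\|\delta p\|$ contribution to be collected with coefficient $\sqrt{2/K}$ rather than something larger. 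The remaining linear diffusion cross term $-2\rho\nu(\nabla\delta u,\nabla\delta U)$ is combined with the leading $\|\nabla\delta u\|^2$ from the RHS to produce the factor $(1-\rho\nu)$, which is where the implicit parameter restriction $\rho\nu\le 1$ comes from.

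The main obstacle is the bookkeeping for the Young's-inequality weights: they must be tuned so that (i) a clean constant $K$ of $\|\nabla\delta U\|^2+\alpha\|\delta P\|^2$ survives on the left after absorbing the cross terms, (ii) the coefficient of $\|\nabla\delta u\|^2$ collects exactly into $\bigl(\tfrac{1-\rho\nu}{\sqrt{K}}+\tfrac{\rho}{\sqrt{KN}}((1+2\rho\nu)\|\nabla u\|+\sqrt{\alpha}\|p\|+2\rho\|f\|_{-1})\bigr)^2$, and (iii) the coefficient of $\alpha\|\delta p\|^2$ collects into $2/K$. Taking square roots and majorizing $\sqrt{a^2+b^2}\le\sqrt{2}\max\{a,b\}$, or handling the two source terms separately, then yields \eqref{GLipschitz} with the stated maximum. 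Everything else is the same kind of estimate used in the proof of Lemma \ref{lemma:Gwelldefined}; the delicate step is ensuring that the Young's constants chosen at each cross term line up to reproduce the precise constant $C_L$ stated in the lemma.
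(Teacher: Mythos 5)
Your proposal follows essentially the same energy argument as the paper's proof: subtract the two instances of \eqref{AHwithG}--\eqref{AHwithG1}, split the trilinear difference so that skew-symmetry \eqref{skew1} kills one piece, test with the difference of the images, bound the remaining defect term via \eqref{skew2} together with Lemma \ref{lemma:Gwelldefined}, and absorb the cross terms with Cauchy--Schwarz and Young to arrive at the constants $K$ and $N$. The only deviations are cosmetic: the paper does not polarize the update terms but instead keeps the cross term $-(\nabla(u-w),\nabla(G_1(u,p)-G_1(w,z)))$ and merges it with the $\rho\nu$ term to produce the $(1-\rho\nu)$ factor directly (your polarized bookkeeping yields an equivalent but slightly differently shaped constant), and the paper's splitting leaves the defect as $b(u-w;G_1(w,z),\cdot)$ rather than your $b(u-w;G_1(u,p),\cdot)$, a relabeling that does not affect the argument.
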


\begin{remark}
Note that we have already discussed that in case of sufficiently small data and particularly chosen parameters, Algorithm \ref{Alg.3} is contractive.  In this section we assume to be in the contractive setting, and thus we have that the Lipschitz constant $C_L$ in Lemma \ref{lemma:GLipschitz} is less than $1$.
\end{remark}

\begin{proof}
Subtracting \eqref{AHwithG} with $(w,z)$ from \eqref{AHwithG} with $ (u,p)$ gives
\begin{align*}
	(\nabla(G_1(u,p)-G_1(w,z)),\nabla  v )-(\nabla(u-w),\nabla  v ) + \rho\nu(\nabla(u-w), \nabla  v ) + \rho b(u;&G_1(u,p)-G_1(w,z), v )\\+ \rho b(u-w;G_1(w,z), v )+\rho\gamma(\nabla\cdot(G_1(u,p)- G_1(w,z)), \nabla\cdot  v )&=\rho(p-z,\nabla\cdot  v ) ,\\
	\alpha(G_2(u,p)-G_2(w,z),q) -\alpha(p-z,q)+ \rho (\nabla\cdot  (G_1(u,p)-G_1(w,z)),q)&=0  .
\end{align*}

Then, setting $v=G_1(u,p)-G_1(w,z)$ and $q=G_2(u,p)-G_2(w,z)$ which eliminates the first nonlinear term on the left hand side of the first equation, and combining these equations provide
\begin{align*}
	&\|\nabla(G_1(u,p)-G_1(w,z))\|^2+\alpha\|G_2(u,p)-G_2(w,z)\|^2+\rho\gamma\|\nabla\cdot(G_1(u,p)- G_1(w,z))\|^2\\& =\rho(p-z,\nabla\cdot (G_1(u,p)-G_1(w,z)))+(1-\rho\nu)(\nabla(u-w),\nabla (G_1(u,p)-G_1(w,z))) \\&-\rho b(u-w;G_1(w,z),G_1(u,p)-G_1(w,z))+\alpha(p-z,G_2(u,p)-G_2(w,z))\\&-\rho (\nabla\cdot (G_1(u,p)-G_1(w,z)),G_2(u,p)-G_2(w,z)) .                                                            
\end{align*}
By dropping positive term $\rho\gamma\|\nabla\cdot(G_1(u,p)- G_1(w,z))\|^2$ on the left hand side and using Cauchy-Schwarz and Young's inequalities, Lemma \ref{lemma:Gwelldefined} and {\color{black}\eqref{skew2}} , we get
\begin{align*}
	\left(\frac{1}{2}-\frac{3\rho^2\alpha^{-1}}{4}\right)\|\nabla(G_1(u,p)-G_1(w,z))\|^2+\frac{\alpha}{4}\|G_2(u,p)-G_2(w,z)\|^2
	 \leq
	2\alpha\|p-z\|^2\\
	+\left((1-\rho\nu)^2+ \rho^2 \left( \frac{(1+4\rho^2\nu^2 )}{N}\|\nabla u\|^2
	+ \frac{\alpha}{N}\|p\|^2
	+ \frac{4\rho^2}{N} \|f\|^2_{-1} \right)\right) \|\nabla(u-w)\|^2.                                                                    
\end{align*}
Defining $K=\min \{\left(\frac{1}{2}-\frac{3\rho^2\alpha^{-1}}{4}\right), \frac{1}{4}   \}$ and then dividing both sides by $K$ yields \eqref{GLipschitz}.
\end{proof}

Next, we define an operator $G'$ and show it is indeed the Fr\'echet derivative of the operator of $G.$
\begin{definition}
Given $(u,p)\in  X _h\times Q_h$, define an operator $G'(u,p;\cdot,\cdot): X_h\times Q_h\rightarrow  X_h\times Q_h$ by
\begin{align*}
G'(u,p;w,s)\eqqcolon (G'_1(u,p;w,s),G'_2(u,p;w,s))
\end{align*}
satisfying for all $(w,s)\in X _h\times Q_h$.
\begin{equation}\label{Gderivative}
	\begin{aligned}
		(\nabla(G'_1(u,p;w,s)-w),\nabla  v ) + \rho\nu(\nabla w, \nabla  v ) +& \rho b(w;G_1(u,p), v )+ \rho b(u;G'_1(u,p;w,s), v )\\ \rho\gamma(\nabla\cdot G'_1(u,p;w,s), \nabla\cdot  v )-\rho(s,\nabla\cdot  v )&= 0,\\
		\alpha(G'_2(u,p;w,s)-s,q) + \rho (\nabla\cdot G'_1(u,p;w,s),q)&=0.    
	\end{aligned}                                                                    
\end{equation} 
\end{definition}
\begin{lemma}\label{lemma:Gderivative}
The operator $G'$ is well-defined for all $(u,p), (w,s)\in  X_h\times Q_h$ such that
\begin{equation}\label{Gderivativebound}
	\begin{aligned}
		\| G'(u,p;w,s)\|_H\leq {\color{black}C_L}\|(w,s)\|_H
	\end{aligned}                                                                    
\end{equation} 
\end{lemma}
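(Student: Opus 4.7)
The plan is to mirror the proof of Lemma \ref{lemma:GLipschitz} almost verbatim, exploiting that the linear system \eqref{Gderivative} defining $G'(u,p;w,s)$ shares the same coercive structure as the difference equations used there, with the correspondence $u-w \leftrightarrow w$, $p-z \leftrightarrow s$, and the frozen nonlinear data $G_1(w,z) \leftrightarrow G_1(u,p)$. Because $G'$ is linear in $(w,s)$ on the finite-dimensional space $X_h\times Q_h$, proving the a priori bound \eqref{Gderivativebound} will automatically yield well-posedness: existence and uniqueness follow from injectivity, which is the $(w,s)=(0,0)$ case of the estimate.

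First, I would test the momentum equation in \eqref{Gderivative} with $v = G'_1(u,p;w,s)$ and the pressure equation with $q = G'_2(u,p;w,s)$. The skew-symmetry property \eqref{skew1} kills the term $\rho\, b(u; G'_1, G'_1)$, which is the only place the unknown appears quadratically in the nonlinear form. Adding the two resulting identities and rearranging the terms involving $(\nabla w,\nabla G'_1)$, $(s,\nabla\cdot G'_1)$, $(s,G'_2)$, and $(\nabla\cdot G'_1, G'_2)$ on the right-hand side yields the energy identity
\[
\|\nabla G'_1\|^2 + \alpha\|G'_2\|^2 + \rho\gamma\|\nabla\cdot G'_1\|^2 = (1-\rho\nu)(\nabla w,\nabla G'_1) - \rho\, b(w; G_1(u,p), G'_1) + \rho(s,\nabla\cdot G'_1) + \alpha(s,G'_2) - \rho(\nabla\cdot G'_1, G'_2),
\]
which is structurally the same identity obtained at the corresponding step of Lemma \ref{lemma:GLipschitz}.

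Next, I would bound the right-hand side term by term using Cauchy–Schwarz and Young's inequality, picking exactly the same Young weights as in the proof of Lemma \ref{lemma:GLipschitz}. The only nonroutine estimate is the trilinear form, for which \eqref{skew2} gives
\[
\rho\,|b(w; G_1(u,p), G'_1)| \le \rho M \|\nabla w\|\,\|\nabla G_1(u,p)\|\,\|\nabla G'_1\|,
\]
and I would then invoke Lemma \ref{lemma:Gwelldefined} to control $\|\nabla G_1(u,p)\| \le \|G(u,p)\|_H$ by the data-dependent right-hand side of \eqref{Gwelldefined}. After absorbing the quadratic-in-$G'$ terms into the left-hand side, dropping the positive grad-div contribution $\rho\gamma\|\nabla\cdot G'_1\|^2$, and dividing by $K = \min\{1/2 - 3\rho^2\alpha^{-1}/4,\, 1/4\}$, taking square roots produces \eqref{Gderivativebound} with constant $C_L$ exactly as defined in Lemma \ref{lemma:GLipschitz}.

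The main obstacle is bookkeeping rather than analysis: the constant $C_L$ in the statement is the maximum of two expressions, the second of which packages together $(1-\rho\nu)$, $\|\nabla u\|$, $\|p\|$, and $\|f\|_{-1}$ in a specific way, so the Young parameters must be chosen so that the coefficient of $\|(w,s)\|_H^2$ reassembles into $C_L^2$ without slack. Since the identity above is formally identical to the one in the Lipschitz proof, no new estimates are required; the same choice of Young parameters works verbatim, and the bound on $\|\nabla G_1(u,p)\|$ from Lemma \ref{lemma:Gwelldefined} replaces the analogous bound on $\|\nabla G_1(w,z)\|$ used there.
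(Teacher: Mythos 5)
Your proposal matches the paper's proof essentially step for step: test with $v=G'_1(u,p;w,s)$, $q=G'_2(u,p;w,s)$, use skew-symmetry to eliminate $\rho\, b(u;G'_1,G'_1)$, bound the remaining trilinear term via \eqref{skew2} together with Lemma \ref{lemma:Gwelldefined}, drop the grad-div term, absorb with Young's inequality into the coefficients $\frac12-\frac{3\rho^2\alpha^{-1}}{4}$ and $\frac{\alpha}{4}$, and divide by the same constant $K$ from Lemma \ref{lemma:GLipschitz}. The well-posedness argument via linearity and finite dimensionality is also the one the paper relies on, so the proposal is correct and takes the same route.
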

\begin{proof}
	Adding equations in \eqref{Gderivative}  and setting $ v =G'_1(u,p;w,s)$ and $q=G'_2(u,p;w,s)$ produces
\begin{equation}
	\begin{aligned}
		&\|\nabla G'_1(u,p;w,s)\|^2 
		+ \rho\gamma\|\nabla\cdot G'_1(u,p;w,s)\|^2
		+\alpha\|G'_2(u,p;w,s)\|^2 
		\\&=
		(1-\rho\nu)(\nabla w,\nabla G'_1(u,p;w,s))
		+
		\alpha(s,G'_2(u,p;w,s))
		- \rho b(w;G_1(u,p),G'_1(u,p;w,s)) 
				\\&
		+\rho(s,\nabla\cdot G'_1(u,p;w,s))
		-\rho (\nabla\cdot G'_1(u,p;w,s),G'_2(u,p;w,s)).
	\end{aligned}                                                                    
\end{equation} 
{\color{black}
Then, by dropping positive term $\rho\gamma\|\nabla\cdot G'_1(u,p;w,s)\|^2$ on the left hand side, applying Cauchy-Schwarz and Young's inequalities produces
%
%CS
%	\begin{align*}
%		&\|\nabla G'_1(u,p;h,s)\|^2 
%+ \rho\gamma\|\nabla\cdot G'_1(u,p;h,s)\|^2
%+\alpha\|G'_2(u,p;h,s)\|^2 
%\\&\leq
%|1-\rho\nu|\|\nabla h\| \|\nabla G'_1(u,p;h,s)\|
%+
%\alpha\|s\| \|G'_2(u,p;h,s)\|
%+ \rho M \|\nabla h\| \|\nabla G_1(u,p)\| \|\nabla G'_1(u,p;h,s)\|
%\\&
%+\rho\|s\| \|\nabla G'_1(u,p;h,s)\|
%+\rho \|\nabla G'_1(u,p;h,s)| \|G'_2(u,p;h,s)\|
%\end{align*}   
%
\begin{align*}
&\|\nabla G'_1(u,p;h,s)\|^2 
+\alpha\|G'_2(u,p;h,s)\|^2 
\\&\leq
(1-\rho\nu)^2\|\nabla h\|^2+\frac{1}{4} \|\nabla G'_1(u,p;h,s)\|^2
+
\alpha\|s\|^2 +\frac{\alpha}{4} \|G'_2(u,p;h,s)\|^2
\\&
+ \rho^2 M^2\left( \frac{(1+4\rho^2\nu^2 )}{N}\|\nabla u\|^2
+ \frac{\alpha}{N}\|p\|^2
+ \frac{4\rho^2}{N} \|f\|^2_{-1} \right) \|\nabla h\|^2  +\frac{1}{4} \|\nabla G'_1(u,p;h,s)\|^2
\\&
+\alpha\|s\|^2 +\frac{\rho^2\alpha^{-1}}{4} \|\nabla G'_1(u,p;h,s)\|^2
+\frac{\rho^2\alpha^{-1}}{2} \|\nabla G'_1(u,p;h,s)|^2 +\frac{\alpha}{2} \|G'_2(u,p;h,s)\|^2
\end{align*}
thanks to the Lemma \ref{lemma:Gwelldefined} and \eqref{skew2}. After rearranging the terms, we get
}
	\begin{align*}
	&\left(\frac{1}{2}-\frac{3\rho^2\alpha^{-1}}{4}\right)\|\nabla G'_1(u,p;h,s)\|^2 
	+\frac{\alpha}{4} \|G'_2(u,p;h,s)\|^2 
	\\&\leq
	\left( (1-\rho\nu)^2+\rho^2 M^2\left( \frac{(1+4\rho^2\nu^2 )}{N}\|\nabla u\|^2
	+ \frac{\alpha}{N}\|p\|^2
	+ \frac{4\rho^2}{N} \|f\|^2_{-1} \right) \right)\|\nabla h\|^2+
	2\alpha\|s\|^2 
\end{align*}
{\color{black}  
Taking the square root of both sides and considering constant $K$ which is defined in proof of  Lemma \ref{lemma:GLipschitz} finishes the proof.}

%Duygu: $C_L$ is the Lipschitz constant in \eqref{GLipschitz}. It does not depend on $u$ and $p$. I think we cannot use \eqref{GLipschitz} to bound $\|\nabla G_1(u,p)\|$ which show up after we %bound nonlinear term by Cauchy-Schwarz and Young"s. Instead, we proved that G is well-defined by Lemma \ref{lemma:Gwelldefined}, and I think we should use \eqref{Gwelldefined} from %Lemma \ref{lemma:Gwelldefined} to bound $\|\nabla G_1(u,p)\|$. Because of this, we get $C_L$ after following steps we did above. So, I change $C$ with $C_L$. }

%{\color{red} I believe that since we make a detailed analysis on the coefficients in the previous lemma, we should do the same thing here. Already, the Frechet derivative Lemmas are some of %the most important parts of the paper . If we give the proof in a few lines then it might give the referee an impression like it is trivial. So, Duygu could you please add the left coefficient part from %my notes that I sent earlier and explicitly define what C is? Btw, since C does not have to be the same constant as $C_L$ I changed it to C.}
\end{proof}

Now, we prove that $G'$ is Fr\'echet derivative operator of $G.$
\begin{lemma}\label{lemma:Frechetderiv}
For any $(u,p)\in X_h\times Q_h$
	\begin{align}\label{Frechetderiv}
	\|\ G(u+w,p+s) - G(u,p) - G'(u,p; w,s)\|_H \leq \frac{\rho M C_L}{\sqrt{1-\rho\alpha^{-1}}} \|(w,s)\|_H^2.
\end{align} 
\end{lemma}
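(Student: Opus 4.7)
The plan is to derive an error equation for
\[
E := G(u+w,p+s) - G(u,p) - G'(u,p;w,s)
\]
with components $(E_1,E_2)\in X_h\times Q_h$, and then bound $\|E\|_H$ by a standard energy argument that relies on skew-symmetry of $b$ and the Lipschitz estimate from Lemma \ref{lemma:GLipschitz}. I would first write out the three defining systems: \eqref{AHwithG}--\eqref{AHwithG1} evaluated at $(u+w,p+s)$, the same system at $(u,p)$, and the linearized system \eqref{Gderivative} evaluated at $(u,p;w,s)$. When the latter two are subtracted from the first, every linear contribution cancels exactly (the $\rho^{-1}$ Laplacian piece because $(u+w)-u-w=0$, the pressure piece because $(p+s)-p-s=0$, and similarly for the viscous and grad-div terms). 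What remains is the cubic convective remainder together with $(\nabla E_1,\nabla v)$ and $\rho\gamma(\nabla\cdot E_1,\nabla\cdot v)$ in the momentum equation, and the linear identity $\alpha(E_2,q)+\rho(\nabla\cdot E_1,q)=0$ in the pressure equation.

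The one delicate step is the algebraic identity that resolves the convective remainder. Expanding $b(u+w;z,v)=b(u;z,v)+b(w;z,v)$ and regrouping yields
\[
b(u+w;G_1(u+w,p+s),v)-b(u;G_1(u,p),v)-b(w;G_1(u,p),v)-b(u;G'_1(u,p;w,s),v) = b(u;E_1,v) + b(w;G_1(u+w,p+s)-G_1(u,p),v).
\]
The first piece on the right vanishes upon testing with $v=E_1$ by skew-symmetry. The second piece is the genuine quadratic remainder, and by the estimate \eqref{skew2} combined with Lemma \ref{lemma:GLipschitz} it obeys
\[
\rho |b(w;G_1(u+w,p+s)-G_1(u,p),E_1)| \le \rho M \|\nabla w\| \cdot C_L\|(w,s)\|_H \cdot \|\nabla E_1\| \le \rho M C_L \|(w,s)\|_H^2 \|\nabla E_1\|.
\]

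After testing the pressure identity with $q=E_2$ and adding it to the tested momentum identity, I arrive at
\[
\|\nabla E_1\|^2 + \alpha\|E_2\|^2 + \rho\gamma\|\nabla\cdot E_1\|^2 \le \rho M C_L \|(w,s)\|_H^2 \|\nabla E_1\| + \rho\|\nabla\cdot E_1\|\|E_2\|.
\]
From here I would drop the nonnegative grad-div term, apply Young's inequality to the divergence--pressure coupling with the weight $\alpha$ to absorb $\tfrac{\alpha}{2}\|E_2\|^2$ into the left side, and then apply Young's once more to the quadratic term with a weight calibrated to leave exactly the factor $\tfrac{1}{2}(1-\rho\alpha^{-1})$ in front of $\|\nabla E_1\|^2$. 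Factoring out the common constant $\tfrac{1}{2}(1-\rho\alpha^{-1})$ from $\|\nabla E_1\|^2+\alpha\|E_2\|^2=\|E\|_H^2$ and taking a square root produces \eqref{Frechetderiv}. The main obstacle is the trilinear identity above; once it is correct, the remaining work is routine energy estimation, and the denominator $\sqrt{1-\rho\alpha^{-1}}$ emerges naturally from Young's absorption of the divergence--pressure coupling under the assumption $\rho<\alpha$ (which is implied by the contractivity hypothesis in force throughout this section).
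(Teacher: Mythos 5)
Your proposal is correct and follows essentially the same route as the paper: the identical error equation with the trilinear remainder splitting into $b(u;E_1,v)$ (killed by skew-symmetry upon testing) plus $b(w;G_1(u+w,p+s)-G_1(u,p),v)$ (bounded via \eqref{skew2} and Lemma \ref{lemma:GLipschitz}), followed by the same Young's absorptions. The only caveat is that the natural Young's split of $\rho\|\nabla\cdot E_1\|\,\|E_2\|$ leaves a coefficient $\tfrac12(1-\rho^2\alpha^{-1})$ rather than $\tfrac12(1-\rho\alpha^{-1})$ on $\|\nabla E_1\|^2$ (requiring $\rho^2<\alpha/?$, i.e.\ $\rho^2\alpha^{-1}<1$), a discrepancy that is already present between the paper's own statement and its proof, so your derivation matches the paper's argument rather than the stated constant.
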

\begin{proof}
Denote $ \eta_1 = G_1(u+w,p+s) - G_1(u,p) - G_1'(u,p; w,s), \eta_2 = G_2(u+w,p+s) - G_2(u,p) - G_2'(u,p;w,s).$
Subtracting the sum of \eqref{Gderivative} and \eqref{AHwithG} from the equation \eqref{AHwithG} with $(u+w, p+s)$ yields

%{\color{red} this relation is actually obtained after a few steps and I think we should not ignore them. Instead of combining 4-5 steps in one line, some explanation should be given for at least %b(.,.,.) terms. If we skip all the steps then the proof seems very short and trivial which is not that good}
% Leo: I think this one is kind of short and trivial, from FEM point of view.

	\begin{align*}
		(\nabla \eta_1,\nabla  v ) +  \rho\gamma(\nabla\cdot \eta_1, \nabla\cdot  v )+\rho b(u;\eta_1, v )+\rho b(w;G_1(u+w,p+s)-G_1(u,p), v )&= 0,\\
		\alpha(\eta_2,q) + \rho (\nabla\cdot \eta_1,q)&=0.    
	\end{align*} 
Choosing $ v =\eta_1$ and $q=\eta_2$ vanishes first nonlinear term on the left hand side, and combining these equations gives
	\begin{equation}
		\|\nabla \eta_1\|^2 + \alpha\|\eta_2\|^2 + \rho\gamma\|\nabla\cdot \eta_1\|^2=- \rho (\nabla\cdot \eta_1, \eta_2)-\rho b(w;G_1(u+w,p+s)-G_1(u,p),\eta_1).
	\end{equation}                       
	Applying Cauchy-Schwarz on the right hand side and dropping $\rho\gamma\|\nabla\cdot \eta_1\|^2$ on the left hand side yields
\begin{align*}
\|\nabla \eta_1\|^2 + \alpha\|\eta_2\|^2 
\leq
\rho \|\nabla\cdot \eta_1\| \| \eta_2\|
+ M \rho \|\nabla w\| \|\nabla G_1(u+w,p+s)-G_1(u,p)\| \|\nabla \eta_1\|.
\end{align*} 
	Now with Young's inequality and Lemma \ref{lemma:GLipschitz}, we obtain using $\|\cdot\nabla\|\leq\|\nabla\|$ and $\|\nabla w\|^2 \leq \|\nabla w\|^2 + \alpha \|s\|^2 = \|(w,s)\|_H^2 $ that
\begin{align*}
\|\nabla \eta_1\|^2 + \alpha\|\eta_2\|^2 
\leq
\frac{\rho^2\alpha^{-1}}{2} \|\nabla \eta_1\|^2 + \frac{\alpha}{2}\| \eta_2\|^2
+\frac{ \rho^2}{2} M^2C_L^2 \|(w,s)\|^4_H + \frac{1}{2}\|\nabla \eta_1\|^2.
\end{align*} 
Then, rearrange and obtain the following

\begin{align*}
\left(\frac{1}{2}-\frac{\rho^2\alpha^{-1}}{2} \right)\|\nabla \eta_1\|^2 + \frac{\alpha}{2}\|\eta_2\|^2 &\leq \frac{\rho^2}{2} M^2 C_L^2\|\nabla w\|^2 \|\nabla (G_1(u+w,p+s)-G_1(u,p))\|^2 \\&\leq \frac{\rho^2}{2} M^2 C_L^2 \|(w,s)\|_H^4,
\end{align*}                                                                    
where we  used \eqref{skew2}. Then, applying the definition of $\eta_1$ and $\eta_2$, dividing both sides by $\left(\frac{1}{2}-\frac{\rho^2\alpha^{-1}}{2} \right)$ and taking square roots give that $G'$ is indeed the Fr\'echet derivative of $G$ which satisfies \eqref{Frechetderiv}.
\end{proof}

We now proceed to show that $G'$ is Lipschitz continuous over $ X_h\times Q_h$.

\begin{lemma}\label{lemma:GLipschitzcont}
G is Lipschitz continuously differentiable on $ X_h\times Q_h$, such that for all $u,w,\theta\in  X _h$ and $p,s,\xi\in Q_h$,
   \begin{equation}\label{GLipschitzcont}
	\begin{aligned}
		\|G'(u+w,p+s;\theta,\xi)-G'(u,p;\theta,\xi)\|_H
		\leq {\color{black}
		\left( \frac{4\rho^2 M^2 C_L^2}{1-\rho^2\alpha^{-1}}\right)^{1/2} }\|(\theta,\xi)\|_H \|(w,s)\|_H.
	\end{aligned}                                                                    
	\end{equation}
\end{lemma}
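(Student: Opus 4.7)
My plan is to adapt the strategy of Lemma \ref{lemma:Frechetderiv}. I would write \eqref{Gderivative} at base point $(u+w,p+s)$ evaluated in direction $(\theta,\xi)$, subtract \eqref{Gderivative} at base point $(u,p)$ in the same direction, and denote the differences by $\psi_1 := G'_1(u+w,p+s;\theta,\xi)-G'_1(u,p;\theta,\xi)$ and $\psi_2 := G'_2(u+w,p+s;\theta,\xi)-G'_2(u,p;\theta,\xi)$, so the target becomes a bound on $\|(\psi_1,\psi_2)\|_H$. All terms in \eqref{Gderivative} that do not depend on the base point (the $\rho\nu(\nabla\theta,\nabla v)$ and $(\nabla\theta,\nabla v)$ pieces, the $\rho(\xi,\nabla\cdot v)$ coupling, and the $\alpha(\xi,q)$ source) cancel immediately upon subtraction.

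The central algebraic step is to decompose the surviving trilinear difference
\[
\rho b^*(u+w;G'_1(u+w,p+s;\theta,\xi),v)-\rho b^*(u;G'_1(u,p;\theta,\xi),v) = \rho b^*(u+w;\psi_1,v)+\rho b^*(w;G'_1(u,p;\theta,\xi),v)
\]
by adding and subtracting $\rho b^*(u+w;G'_1(u,p;\theta,\xi),v)$ and using bilinearity of $b^*$ in its first and middle arguments. Testing the momentum equation with $v=\psi_1$ and the pressure equation with $q=\psi_2$ and summing then kills the $b^*(u+w;\psi_1,\psi_1)$ piece by skew-symmetry \eqref{skew1}. Dropping the nonnegative $\rho\gamma\|\nabla\cdot\psi_1\|^2$ on the left yields an identity of the form $\|\nabla\psi_1\|^2+\alpha\|\psi_2\|^2 = -\rho(\nabla\cdot\psi_1,\psi_2)-\rho b^*(\theta;G_1(u+w,p+s)-G_1(u,p),\psi_1)-\rho b^*(w;G'_1(u,p;\theta,\xi),\psi_1)$.

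I would then bound the right-hand side term by term. Cauchy--Schwarz and Young control the pressure coupling by $\tfrac{\rho^2\alpha^{-1}}{2}\|\nabla\psi_1\|^2+\tfrac{\alpha}{2}\|\psi_2\|^2$; the two trilinear pieces are handled via \eqref{skew2} combined with Lemma \ref{lemma:GLipschitz} (giving $\|\nabla(G_1(u+w,p+s)-G_1(u,p))\|\le C_L\|(w,s)\|_H$) and Lemma \ref{lemma:Gderivative} (giving $\|\nabla G'_1(u,p;\theta,\xi)\|\le C_L\|(\theta,\xi)\|_H$). Each trilinear bound takes the form $\rho M C_L\|(\theta,\xi)\|_H\|(w,s)\|_H\|\nabla\psi_1\|$, which Young splits into $\tfrac14\|\nabla\psi_1\|^2$ absorbed on the left and $\rho^2 M^2 C_L^2\|(\theta,\xi)\|_H^2\|(w,s)\|_H^2$ kept on the right. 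Collecting, the left-hand side controls $\tfrac{1-\rho^2\alpha^{-1}}{2}\|(\psi_1,\psi_2)\|_H^2$ (positive in the contractive regime of the standing assumption), and dividing through and taking square roots produces exactly the constant $\bigl(4\rho^2 M^2 C_L^2/(1-\rho^2\alpha^{-1})\bigr)^{1/2}$ appearing in \eqref{GLipschitzcont}.

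The main obstacle is the bilinear decomposition in the second paragraph: one must split the mixed trilinear term simultaneously in its first and middle arguments so that skew-symmetry eliminates the cubic-in-$\psi_1$ contribution upon testing, leaving only pieces that are either linear in $\psi_1$ (with the non-$\psi$ factors estimable a priori via Lemmas \ref{lemma:GLipschitz} and \ref{lemma:Gderivative}) or quadratic in $\psi_1$ and $\psi_2$ (absorbable on the left). Once this decomposition is set up, the remainder is a routine assembly of \eqref{skew2}, the previously established bounds in Lemmas \ref{lemma:Gwelldefined}--\ref{lemma:Frechetderiv}, and Young's inequality.
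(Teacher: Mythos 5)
Your proposal is correct and follows essentially the same route as the paper: subtract the two linearized systems, test with the difference so skew-symmetry kills the cubic term, drop the grad-div term, and bound the remaining trilinear pieces via \eqref{skew2} together with Lemmas \ref{lemma:GLipschitz} and \ref{lemma:Gderivative}, arriving at the same constant. The only (immaterial) difference is which cross term you add and subtract in the trilinear decomposition --- you keep $b^*(w;G_1'(u,p;\theta,\xi),v)$ while the paper keeps $b^*(w;G_1'(u+w,p+s;\theta,\xi),v)$ and cancels $b^*(u;e_1,e_1)$ instead of $b^*(u+w;\psi_1,\psi_1)$.
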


\begin{proof}
 Subtracting \eqref{Gderivative} with $G'(u,p;\theta,\xi)$ from \eqref{Gderivative} with $G'(u+w, p+s; \theta,\xi)$ and denoting \\$e_1\eqqcolon G'_1(u+w,p+s;\theta,\xi)-G'_1(u,p;\theta,\xi)$ and $e_2\eqqcolon G'_2(u+w,p+s;\theta,\xi)-G'_2(u,p;\theta,\xi)$ yield
 	\begin{align*}
 		(\nabla e_1,\nabla  v ) + \rho b(\theta;G_1(u+w,p+s)-G_1(u,p), v )+ \rho b(u;e_1, v )\\ +\rho b(w;G'_1(u+w,p+s;\theta,\xi), v )+\rho\gamma(\nabla\cdot e_1, \nabla\cdot  v )&= 0,\\
 		\alpha(e_2,q) + \rho (\nabla\cdot e_1,q)&=0.    
 	\end{align*} 

 Setting $ v =e_1$ and $q=e_2$ vanishes the third term on the left hand side of the first equality and adding these equations provide
 	\begin{align*}
 		&\|\nabla e_1\|^2+ \alpha\|e_2\|^2 +\rho\gamma\|\nabla\cdot e_1\|^2
 		\\&=-\rho (\nabla\cdot e_1,e_2)-\rho b(\theta;G_1(u+w,p+s)-G_1(u,p),e_1) -\rho b(w;G'_1(u+w,p+s;\theta,\xi),e_1).
	\end{align*}  

Dropping the positive term $\rho\gamma\|\nabla\cdot e_1\|^2$ on the left hand side, applying Cauchy-Schwarz and Young's inequalities and \eqref{skew2}, using Lemma \ref{lemma:GLipschitz} and \ref{lemma:Gderivative}, we get
 	\begin{align*}
 	\left(\frac{1}{2}-\frac{\rho^2\alpha^{-1}}{2}\right)\|\nabla e_1\|^2+ \frac{\alpha}{2}\|e_2\|^2
 		& \leq
 		\rho^2 M^2 C_L^2 \|\nabla\theta\|^2 \|(w,s)\|^2_H
 		+ \rho^2 M^2 {\color{black}C_L^2 }\|\nabla w\|^2 \|(\theta,\xi)\|^2
 		\\& \leq
 		{\color{black}2\rho^2 M^2 C_L^2} \|(\theta,\xi)\|_H^2 \|(w,s)\|^2_H.
 	\end{align*}                                                                    
Dividing both sides by $\left(\frac{1}{2}-\frac{\rho^2\alpha^{-1}}{2}\right)$ gives
	\begin{align*}
		\|(e_1,e_2)\|_H^2=\|\nabla e_1\|^2+ \alpha\|e_2\|^2
		\leq {\color{black}
		\frac{4\rho^2 M^2 C_L^2}{1-\rho^2\alpha^{-1}} }\|(\theta,\xi)\|_H^2 \|(w,s)\|^2_H.
	\end{align*}                                                                    
 Then taking the square roots of both sides gives that $G'$ is Lipschitz continuous, and \eqref{GLipschitzcont} holds.
\end{proof}

	\subsection{Convergence of the Anderson Accelerated AH algorithm for steady NSE}
In previous subsection, we proved that the solution operator $G$ associated with grad-div stabilized AH iteration \eqref{AHwithG}-\eqref{AHwithG1} satisfies Assumption \ref{assume:g} which is the one of sufficient conditions to apply the one-step residual bound of \cite{PR21}. Also, Assumption \ref{assume:fg} is satisfied since $G$ is contractive under small data condition and certain parameter choices.

Under these assumptions and with Lemmas \ref{lemma:GLipschitz}, \ref{lemma:Frechetderiv}, \ref{lemma:GLipschitzcont} and Theorem \ref{thm:genm}, we have established the convergence of \eqref{AHwithG}-\eqref{AHwithG1} where $G$ is the solution operator associated with grad-div stabilized AH iteration.
	\begin{theorem}
	\label{thm:aa}
	For any step $k>m$ with $\alpha_{m}^k \neq 0$, the following bound holds for the grad-div stabilized AH iteration \eqref{AHwithG}-\eqref{AHwithG1}
	\begin{align*}
		\| (w_{k+1}, z_{k+1})\|_H \le& \theta_k(1-\beta_k + \beta_k C_L) \| (w_k,z_k)\|_H \\
		&+C\sqrt{1-\theta_{k}^2}\|(w_k,z_k)\|_{H} \sum\limits_{j=1}^{m} \|(w_{k-j+1},z_{k-j+1})\|_H,
	\end{align*}
	for the residual $(w_k,z_k)$, where $\theta_k$ is the gain from the optimization problem,  $C_L$ is the Lipschitz constant of $G$ defined in Lemma \ref{lemma:GLipschitz}, and $C$ depending on $\theta_k, \beta_k, C_L$.
\end{theorem}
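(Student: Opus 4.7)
The plan is to invoke Theorem \ref{thm:genm} (the one-step residual bound of Pollock--Rebholz) directly, after certifying that the fixed point operator $G$ associated with grad-div stabilized AH satisfies Assumptions \ref{assume:g} and \ref{assume:fg}. The bulk of the analytical work has already been carried out in Lemmas \ref{lemma:Gwelldefined}--\ref{lemma:GLipschitzcont}, so the argument is largely one of assembling existing pieces and identifying constants.

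First I would verify Assumption \ref{assume:g}. A fixed point of $G$ exists under the inf-sup condition and small data hypothesis since it coincides with the unique discrete steady NSE solution. The $C^1$ regularity together with the bound $\|G'(u,p;\cdot,\cdot)\|_H \le C_0$ with $C_0 := C_L$ is exactly the content of Lemma \ref{lemma:Gderivative}; identification of $G'$ as the Fr\'echet derivative is Lemma \ref{lemma:Frechetderiv}; and the Lipschitz bound on $G'$, with constant $C_1 := \left(\frac{4\rho^2 M^2 C_L^2}{1-\rho^2\alpha^{-1}}\right)^{1/2}$, is Lemma \ref{lemma:GLipschitzcont}. For Assumption \ref{assume:fg} I would use that in the standing contractive regime, the reverse triangle inequality combined with the Lipschitz bound of Lemma \ref{lemma:GLipschitz} gives
\[
\|w_{k+1}-w_k\|_H = \|(G(x_k)-x_k)-(G(x_{k-1})-x_{k-1})\|_H \ge (1-C_L)\|x_k-x_{k-1}\|_H,
\]
so that $\sigma := 1-C_L > 0$ works.

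With both assumptions verified, Theorem \ref{thm:genm} applies with $Y = X_h\times Q_h$ equipped with $\|\cdot\|_H$. Substituting $C_0 = C_L$ into the leading term of \eqref{eqn:genm} yields $\theta_k(1-\beta_k+C_L\beta_k)\|(w_k,z_k)\|_H$, which is precisely the first term in the stated bound. For the remainder, I would observe that each factor $h(\theta_j) \le C\sqrt{1-\theta_j^2}+\beta_j\theta_j$ is uniformly bounded by a constant depending on $\beta_j$, so that after absorbing $C C_1$, the factors $h(\theta_n)$, the integer weights $(k-n)$ and $m_k$, and any geometric constants from $c_s$ into a single generic constant $C$ depending on $\theta_k,\beta_k,C_L$, the weighted sum collapses to $\sum_{j=1}^m \|(w_{k-j+1},z_{k-j+1})\|_H$ multiplied by $\|(w_k,z_k)\|_H$ after reindexing $n = k-j+1$ (with the $\|w_k\|h(\theta_k)$ contribution corresponding to $j=1$).

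The main obstacle is really only bookkeeping: matching indices in the tail sum and making sure each $h(\theta_j)$ factor and the constant $C_1$ from Lemma \ref{lemma:GLipschitzcont} are absorbed cleanly into the final generic constant $C$. No new analytical inequalities are required, and the claimed bound follows directly once these substitutions are made.
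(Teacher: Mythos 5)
Your proposal is correct and follows essentially the same route as the paper: verify Assumptions \ref{assume:g} and \ref{assume:fg} via Lemmas \ref{lemma:Gderivative}, \ref{lemma:Frechetderiv}, and \ref{lemma:GLipschitzcont} together with contractivity, then invoke Theorem \ref{thm:genm} with $C_0 = C_L$ and absorb the remaining factors into the generic constant $C$. In fact your explicit reverse-triangle-inequality argument giving $\sigma = 1 - C_L$ for Assumption \ref{assume:fg} supplies a detail the paper only asserts.
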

This theorem tells us that \eqref{AHwithG}-\eqref{AHwithG1}, with a good initial guess, converges linearly with rate $\theta_k(1-\beta_k + \beta_kC_L) <1$, which improves on Algorithm \ref{Alg.3} due to the scaling $\theta_k$ and the damping factor $\beta_k$. In the case $G$ is contractive, i.e. $C_L<1$, then the optimal choice for relaxation is $\beta_k=1$.

%%%%%%%%%%%%%%%%%%%%%%%%%%%%%%%%%%
%%%%%%%%%%%%%%%%%%%%%%%%%%%%%%%%%%
%%%%%%%%%%%%%%%%%%%%%%%%%%%%%%%%%%
%%%%%%%%%%%%%%%%%%%%%%%%%%%%%%%%%%
\section{Numerical Experiments}\label{numerical}

In this section, we perform several numerical tests to illustrate the theory above and to show how the grad-div stabilized, Anderson accelerated AH algorithm can be an effective and efficient solver
for the steady NSE.  The stopping criteria for all of our tests is $\|u_k - u_{k-1}\|\leq10^{-6}$.

\begin{figure}[h!]
	\centering
	\begin{tabular}{lc}
		&$\gamma=0$\\
%		\hline
%		\\
		\rot{\quad\quad \quad   Taylor-Hood}
	&\includegraphics[width = .3\textwidth, height=.24\textwidth]{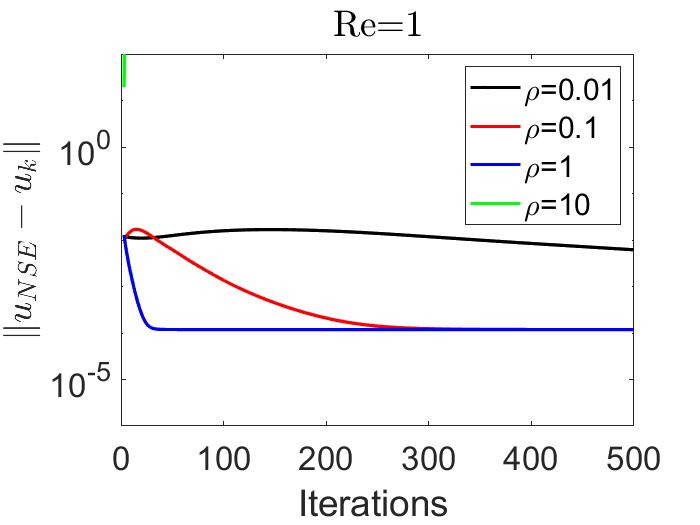}
\includegraphics[width = .3\textwidth, height=.24\textwidth]{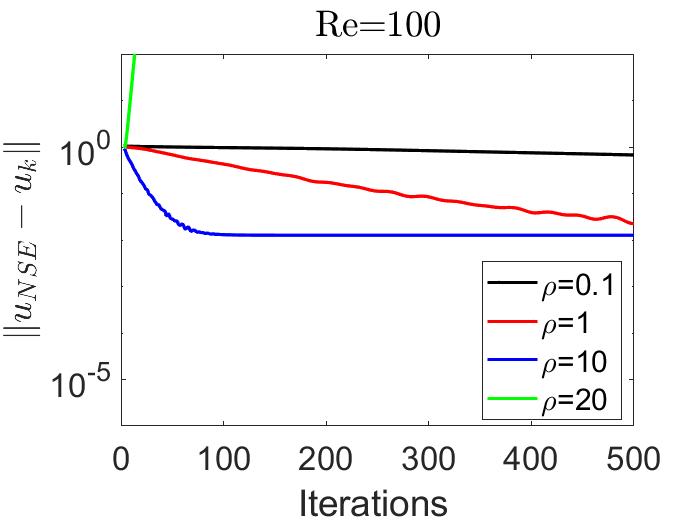}
\includegraphics[width = .3\textwidth, height=.24\textwidth]{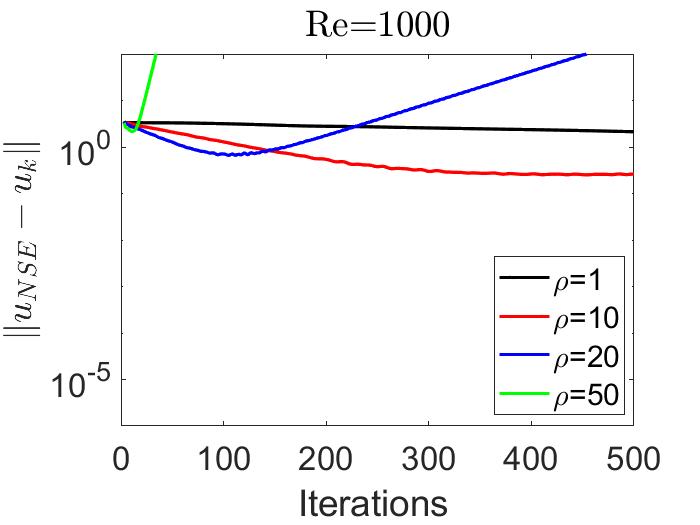}\\
%		\hline
%		\\
				\rot{\quad\quad \quad   Scott-Vogelius}
		&\includegraphics[width = .3\textwidth, height=.24\textwidth]{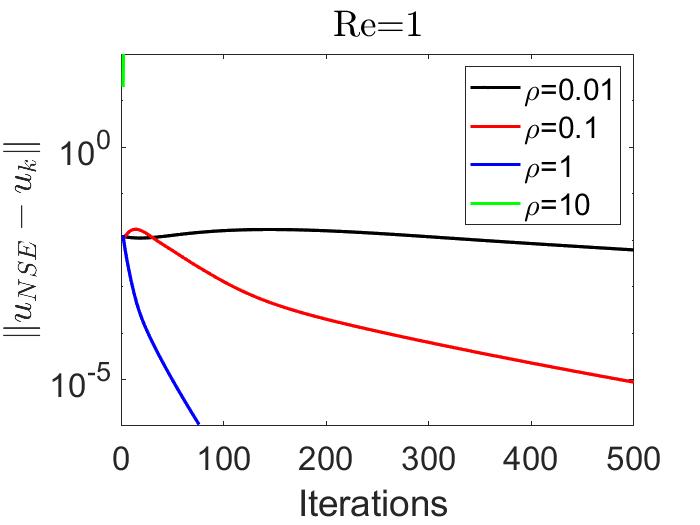}
		\includegraphics[width = .3\textwidth, height=.24\textwidth]{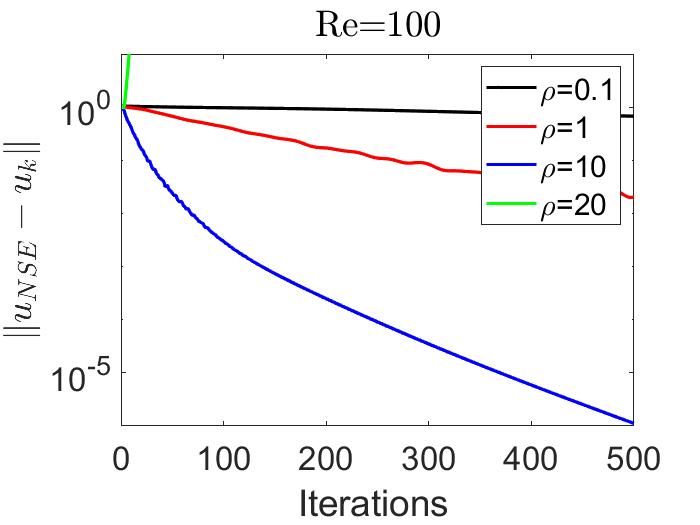}
		\includegraphics[width = .3\textwidth, height=.24\textwidth]{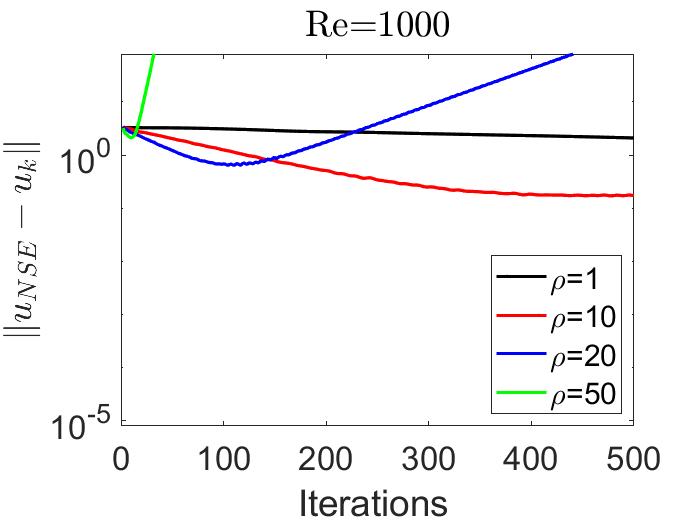}
		\\
	\end{tabular}
\begin{tabular}{lc}
	&$\gamma=1$\\
%	\hline
%	\\
	\rot{\quad\quad \quad   Taylor-Hood}
	&\includegraphics[width = .3\textwidth, height=.24\textwidth]{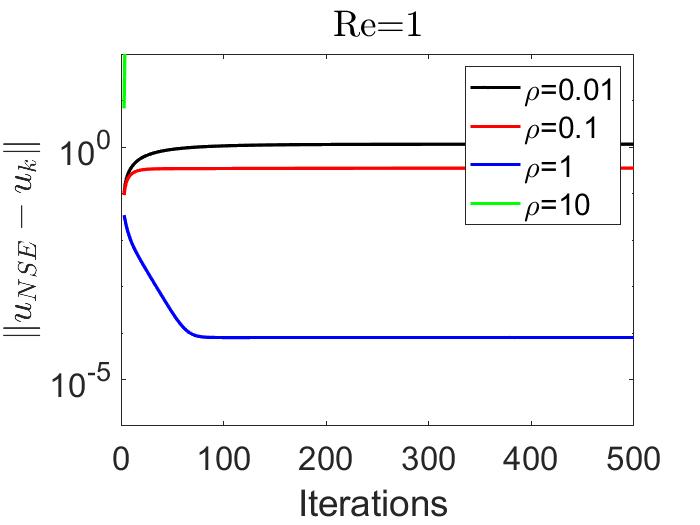}
	\includegraphics[width = .3\textwidth, height=.24\textwidth]{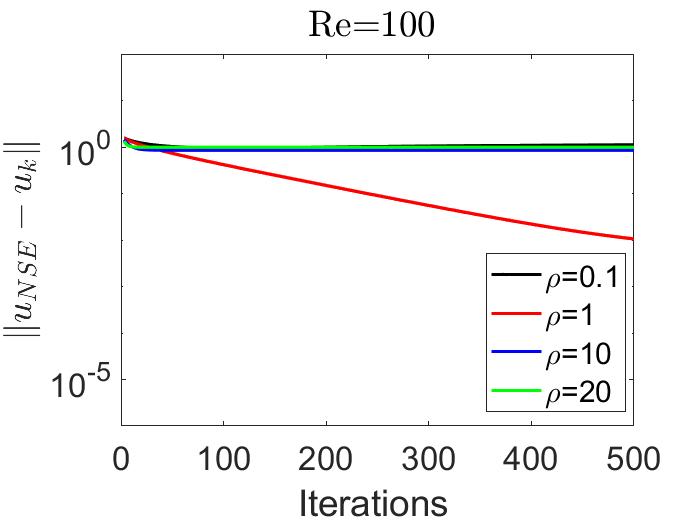}
	\includegraphics[width = .3\textwidth, height=.24\textwidth]{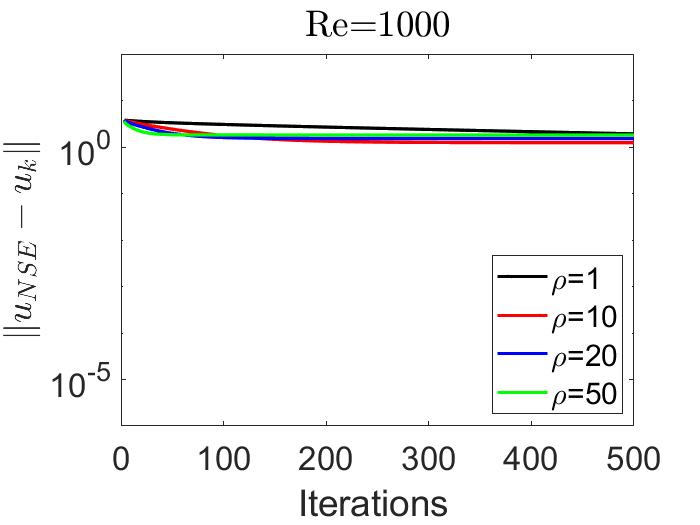}\\
%	\hline
%	\\
	\rot{\quad\quad \quad   Scott-Vogelius}
	&\includegraphics[width = .3\textwidth, height=.24\textwidth]{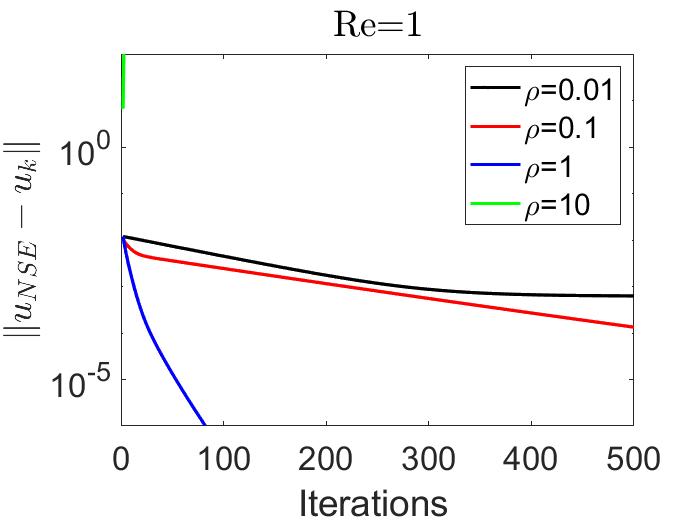}
	\includegraphics[width = .3\textwidth, height=.24\textwidth]{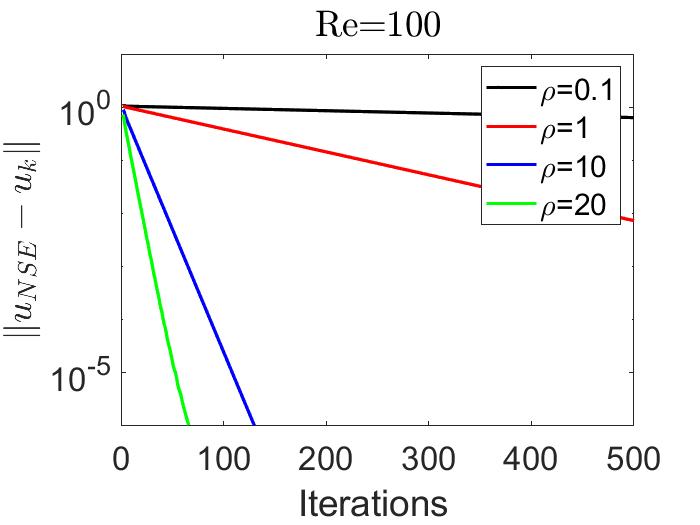}
	\includegraphics[width = .3\textwidth, height=.24\textwidth]{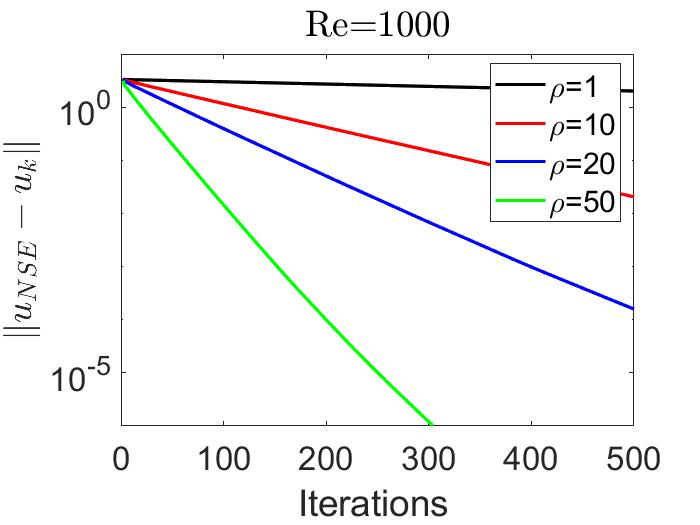}
	\\
\end{tabular}	
%\label{fig:gamma1_varyingrho}

\caption{\label{fig:gamma0_varyingrho} Shown above is convergence behavior for the AH method applied to the driven cavity problem with varying $Re$ and $\rho$.  The top plots are without grad-div stabilization and the bottom plots are with it.  The first and third rows are for TH elements, and the second and last rows are SV.}
\end{figure}

\subsection{Lid-driven cavity}
We first test the AH method for steady NSE on the lid-driven cavity problem.  The domain for the problem is the unit square $\Omega=(0,1)^2$ and we impose Dirichlet boundary conditions by $ u |_{y=1}=(1,0)^T $ and $u=0$ everywhere else. We choose the parameter $\alpha = \nu^{-1}$.  We first illustrate how grad-div stabilization improves the AH method, and show the dramatic improvement offered by $(P_2,P^{disc}_1)$ Scott-Vogelius (SV) over $(P_2,P_1)$ Taylor-Hood (TH).  The second test shows even further dramatic improvement by incorporating AA.

\subsubsection{The effect of grad-div stabilization and comparison of Scott-Vogelius vs Taylor-Hood}

Our analysis above suggests that convergence of AH will be improved from using grad-div stabilization, and also from using SV  elements instead of TH since the connection to the iterated Picard penalty method is only made for SV elements.  Hence we now compare the AH method for both element choices, with and without grad-div stabilization (using parameter $\gamma=1$).  We run the tests for varying $\rho$ (to try to find a good choice of parameter $\rho$) and with varying $Re=\nu^{-1}$.  For these tests a uniform mesh with $h=1/32$ is used.

Results are shown in Figure \ref{fig:gamma0_varyingrho}.  We observe the best AH results clearly come from using SV instead of TH, and using $\gamma=1$ with SV gives by far the best results.  In most cases, TH fails to converge for any $\rho$.  Based on these results, we use SV elements for the rest of the numerical tests in this paper.

\subsubsection{Anderson accelerated grad-div stabilized AH method with SV elements}
We now consider the same test problem, using the AH iteration only with SV elements  and $\gamma=1$, but now adding AA.  We test AA depths $m=0$ (no acceleration),$  1, 5 $ and $ 10 $. Figure \ref{fig:AA100} and \ref{fig:AA1000} show convergence results obtained by Anderson accelerated grad-div stabilized AH method for $Re=100$ and $1000$, respectively, for varying $\rho$. As depth increases, the number of iterations decreases significantly. The fastest convergence is obtained with depths $ m = 50 $, but there is not much improvement past $m=5$.  

We also note that for optimally chosen $\rho$ in this setting (i.e. $\rho=20$ for $Re=100$ and $\rho=50$ for $Re=1000$ based on test above) with SV elements and grad-div stabilization, there is not much difference in convergence from AA.  However, for slightly non optimal $\rho$, there can be a dramatic improvement from AA.  Since one often does not know optimal $\rho$ a priori, the expected case in practice is using a non-optimal $\rho$.  

Comparing to existing literature, for $Re=100$ driven cavity it is reported in \cite{CHS17} that 731 iterations of AH were needed to converge to the same tolerance used herein and with Taylor-Hood elements, $\rho=1.2$ and $\alpha=70$.  Our results with Taylor-Hood elements and no grad-div stabilization were similarly bad, see figure \ref{fig:gamma0_varyingrho} in row 1; in fact, that they got convergence at all for this test is rather extraordinary.  With SV elements and grad-div stabilization, figure \ref{fig:gamma0_varyingrho} shows that with $\alpha=100$ and $\rho=20$, convergence is achieved in 80 iterations.  With less optimal parameter choices, AA can still keep the total iteration count low, see figure \ref{fig:AA100}.

\begin{figure}[h!]
	\centering
	\includegraphics[width = .3\textwidth, height=.24\textwidth]{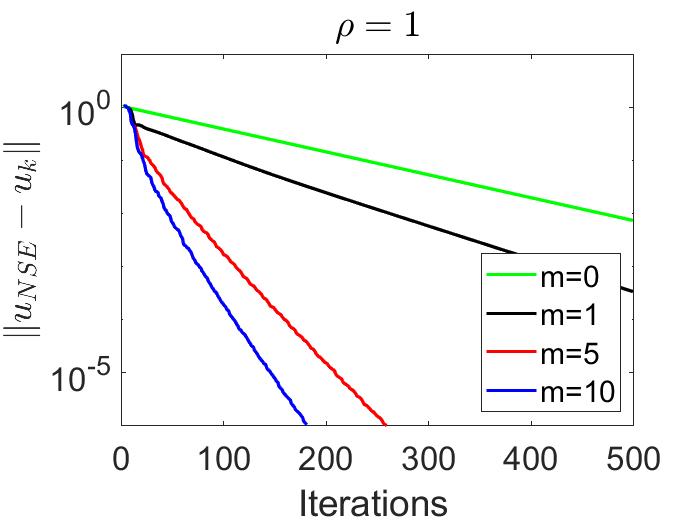}
		\includegraphics[width = .3\textwidth, height=.24\textwidth]{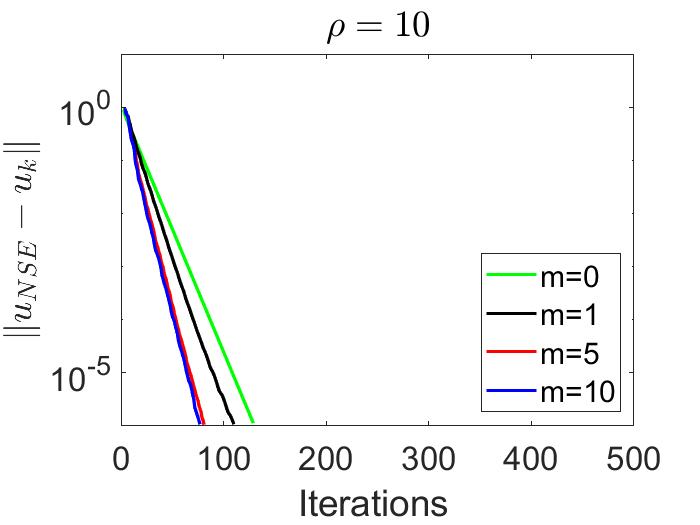}
			\includegraphics[width = .3\textwidth, height=.24\textwidth]{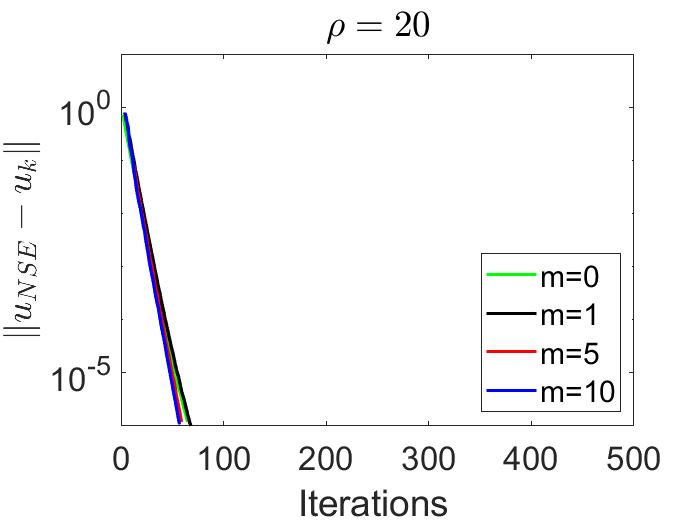}
	\caption{Convergence of Anderson accelerated grad-div stabilized Arrow-Hurwicz method for $Re=100$, different $\rho$'s, $\gamma=1$ with varying $m$. }\label{fig:AA100}
\end{figure}

\begin{figure}[H]
	\centering
	\includegraphics[width = .3\textwidth, height=.24\textwidth]{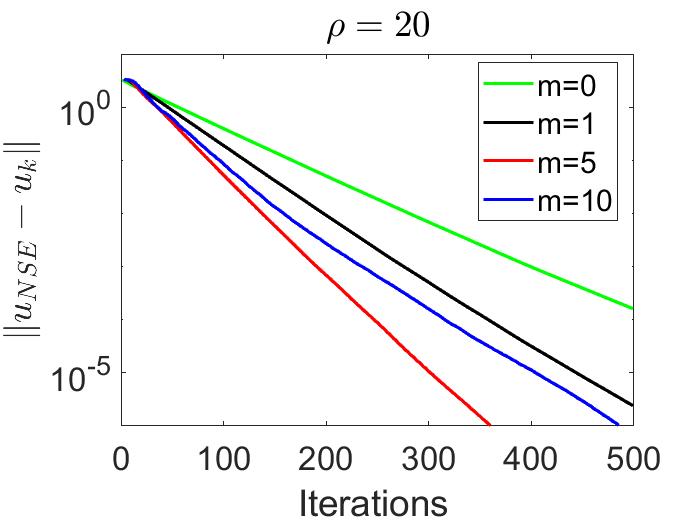}
	\includegraphics[width = .3\textwidth, height=.24\textwidth]{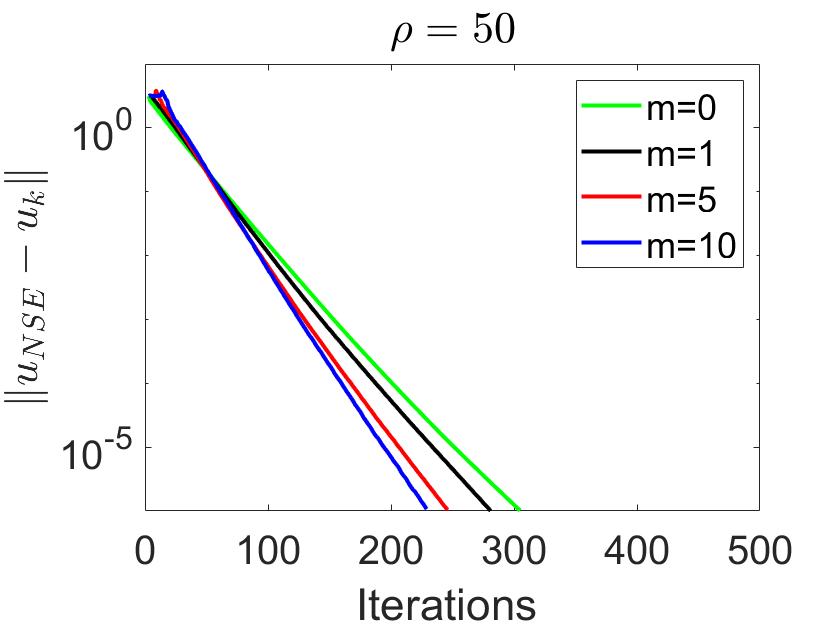}
	\includegraphics[width = .3\textwidth, height=.24\textwidth]{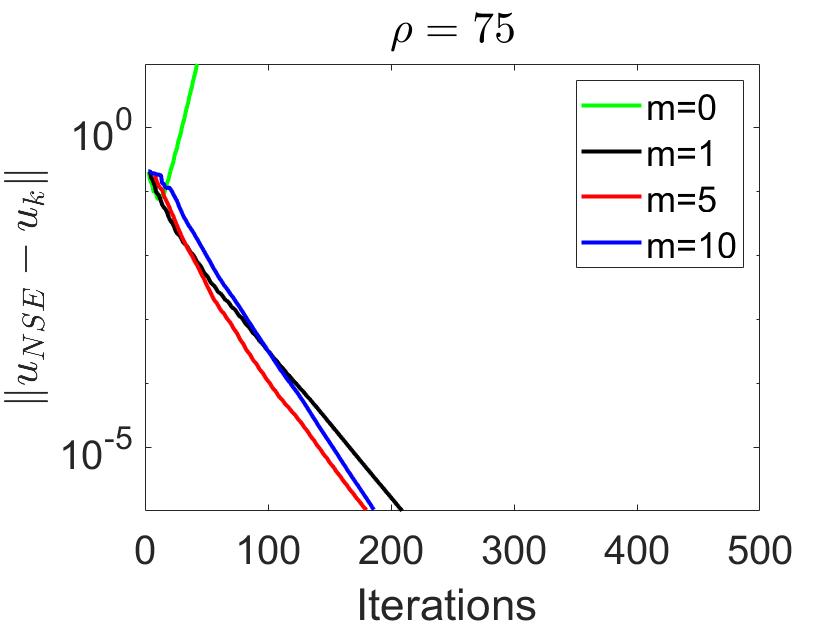}
	\caption{Convergence of Anderson accelerated grad-div stabilized Arrow-Hurwicz method for $Re=1000$, different $\rho$'s, $\gamma=1$  with varying $m$. }\label{fig:AA1000}
\end{figure}

\subsubsection{Driven cavity with $Re$=5,000 and $Re$=10,000}

As a final test with the driven cavity, we consider the case of $Re$=5,000 and $Re$=10,000 with $h$=1/64.  This is a difficult problem for nonlinear solvers \cite{PRX19,PR21}, and we show now that with the right parameter choices, the AH method can be effective for this problem.  For 5,000, $\rho=100, \ \gamma=1,\ m=100$ were used to obtain convergence in 464 iterations.  For 10,000, $\rho=150,\ \gamma=10,\ m=100$ was used to obtain convergence in 217 iterations.  Streamlines for both solutions are shown in Figure \ref{fig:streamlines}, and they are in good agreement with those from \cite{GGS82} even though we use a coarser mesh.  We note that, to date, the highest $Re$ for successful lid driven cavity computations in the literature is 1000 in \cite{CHS17}. 

\begin{figure}[H]
\centering
\includegraphics[width = .4\textwidth, height=.3\textwidth]{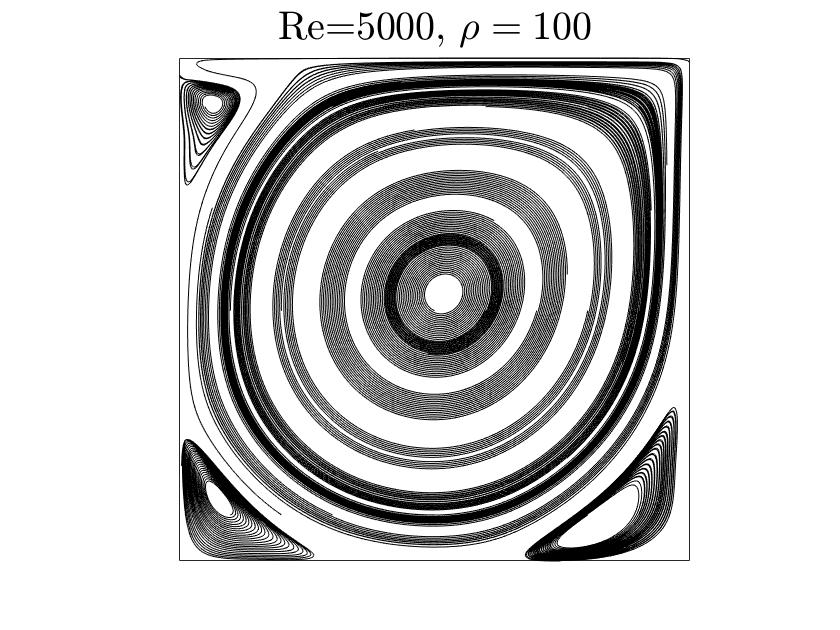}
\includegraphics[width = .4\textwidth, height=.3\textwidth]{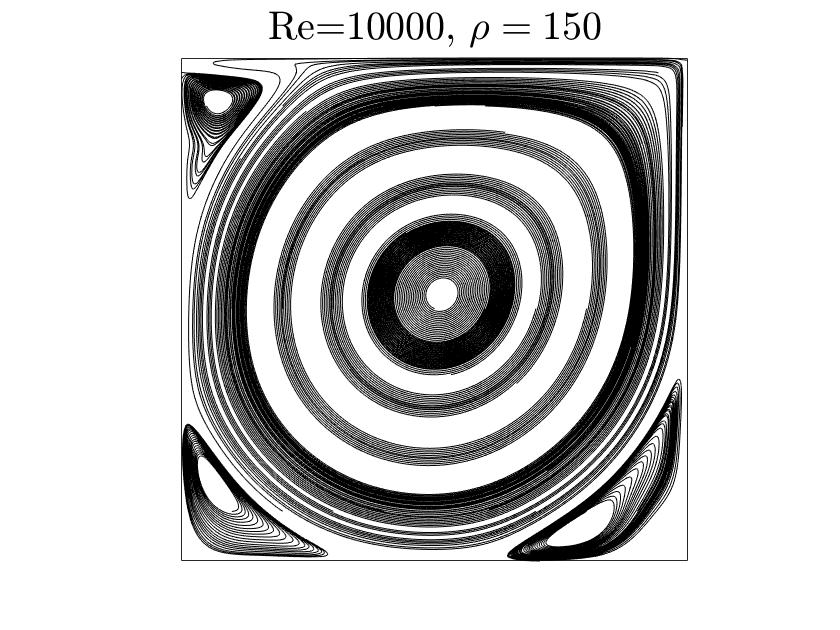}\caption{Show above are streamlines for $Re=5,000$ and $10,000$ driven cavity solutions. \label{fig:streamlines}}
\end{figure}
\subsection{Channel flow past a step}

\begin{figure}[H]
\centering
\includegraphics[width = .48\textwidth, height=.24\textwidth]{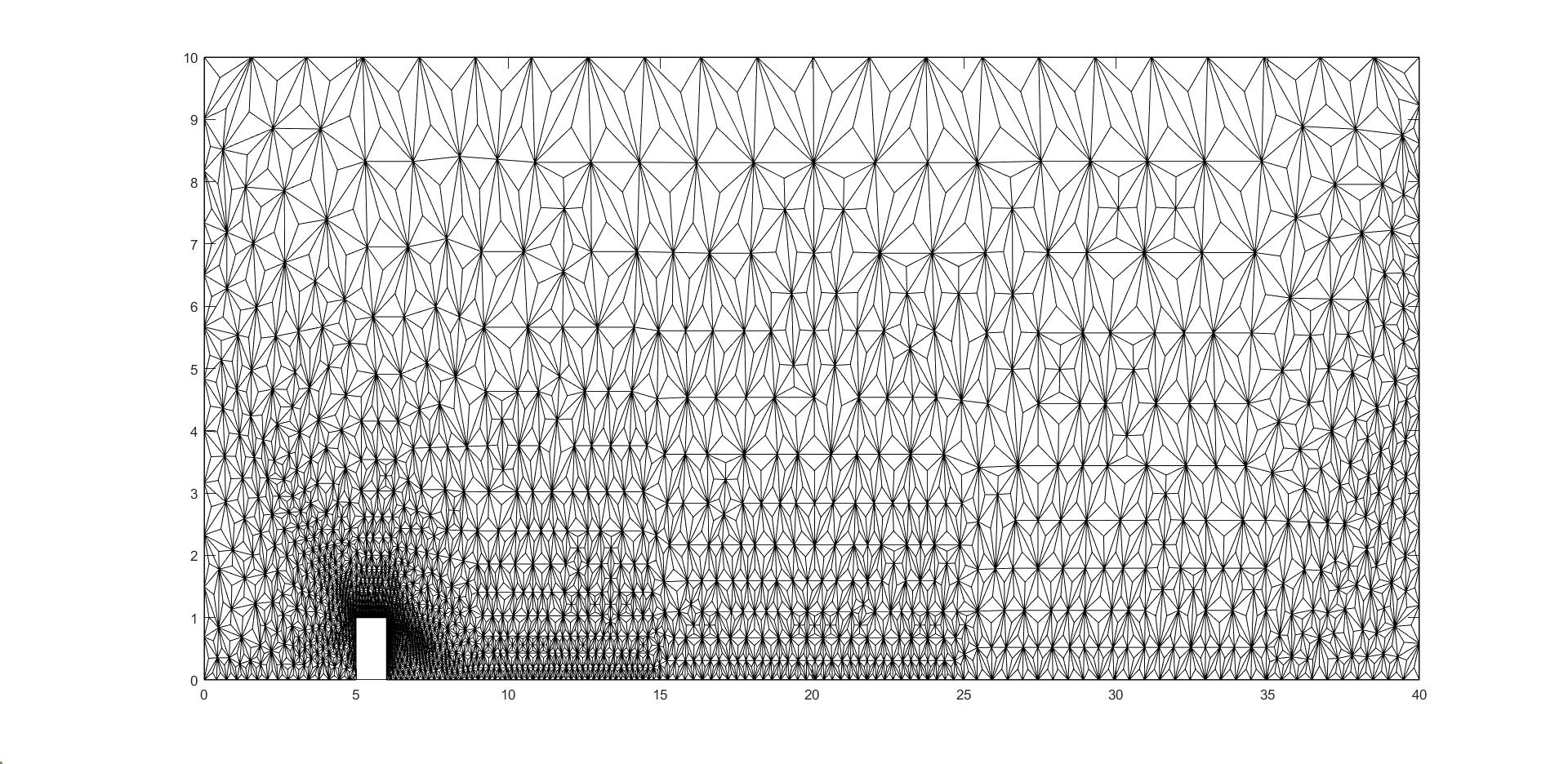}
\includegraphics[width = .48\textwidth, height=.24\textwidth]{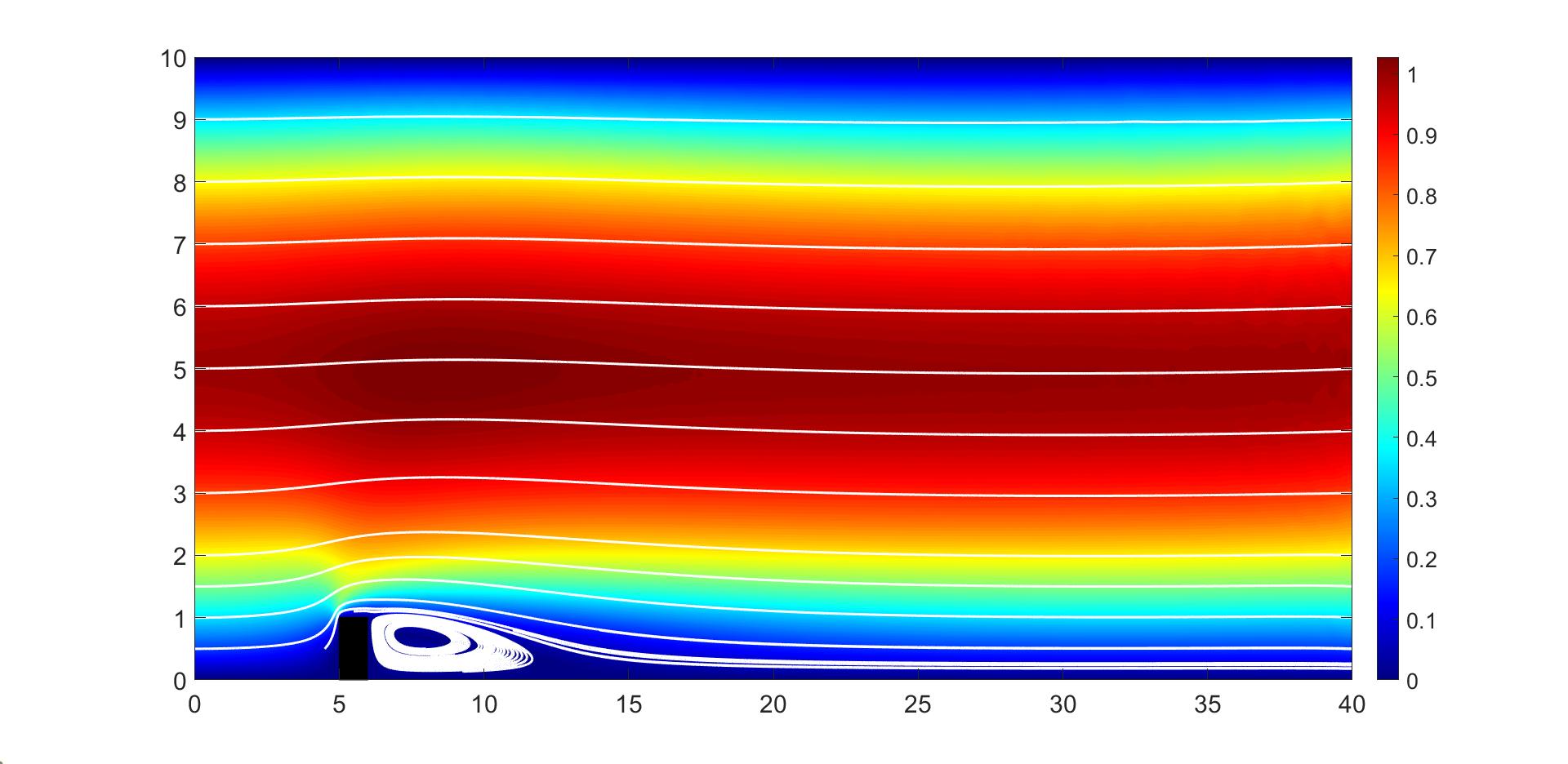}
\caption{Shown above are the mesh for channel flow past a step (left) and Re=100 velocity solution (right).  }\label{fig:mesh}
\end{figure}

For our last test, we consider 2D channel flow past a step with $Re=\nu^{-1}=100$.  The domain for this problem is a $40\times10 $
rectangular channel, with a $1\times1$ ‘step’ placed 5 units into the channel at the bottom. The triangulation we use is shown in Figure \ref{fig:mesh}, along with the $Re=100$ solution found with our solver (which is consistent with solutions from the literature \cite{JL06,GL81,G89}).  The discretization uses $(P_2,P_1^{disc})$ SV elements that provided 32,682 velocity degrees of freedom. 

First we consider $\gamma=10=\varepsilon^{-1}$, noting that obtaining convergence with $\gamma=1$ proved very difficult and we were not able to find a parameter set that gave convergence.  With $\gamma=10$, we computed four parameter sets: $(\rho=50,\ \alpha=\frac{\varepsilon}{\nu})$ , $(\rho=50,\ \alpha=\frac{1}{\nu})$, $(\rho=100,\ \alpha=\frac{\varepsilon}{\nu})$ - which is exactly the iterated penalty Picard method, and $(\rho=100,\ \alpha=\frac{1}{\nu})$.  Convergence plots for each of these parameter sets and varying $m$ are shown in figure \ref{fig:AHvsIPP}, and we observe that $m=100$ is the best choice for AA for all cases, and that AH with parameters chosen to match IPP performs significantly worse than other parameter choices.  The choice $\rho=50$ and $\alpha=\nu^{-1}$ with $m=100$ was very effective.  Results improve with $\gamma=100$ for all parameter sets, see figure \ref{fig:AHvsIPP100}.  Here, again $AH$ improves on IPP, with $\rho=50$ and $\rho=100$ withe $m=100$ were very effective parameter choices.

\begin{figure}[H]
\centering
\hspace{.2in} $(\rho=50,\ \alpha=\frac{\varepsilon}{\nu})$  \hspace{.35in} $(\rho=50,\ \alpha=\frac{1}{\nu})$ \hspace{.25in} $(\rho=100,\ \alpha=\frac{\varepsilon}{\nu})$ (IPP) \hspace{.15in} $(\rho=100,\ \alpha=\frac{1}{\nu})$ \hspace{.2in} \\

\includegraphics[width = .24\textwidth, height=.2\textwidth]{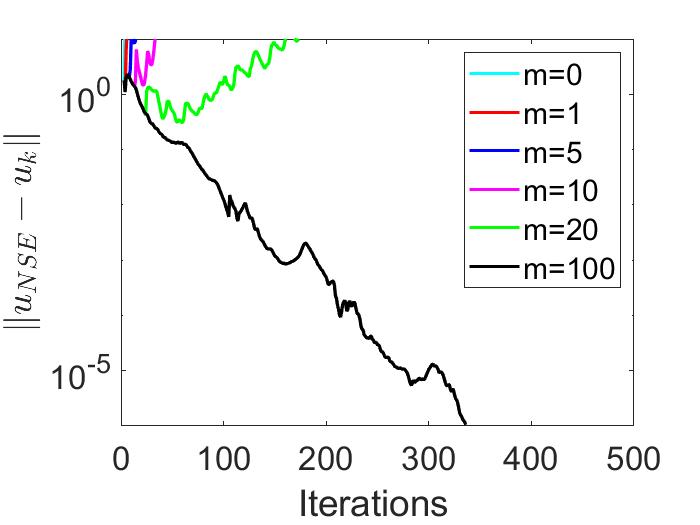}
\includegraphics[width = .24\textwidth, height=.2\textwidth]{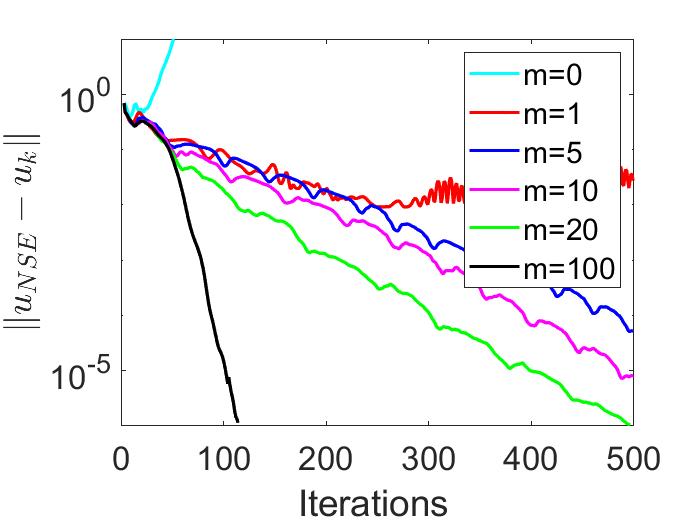}
\includegraphics[width = .24\textwidth, height=.2\textwidth]{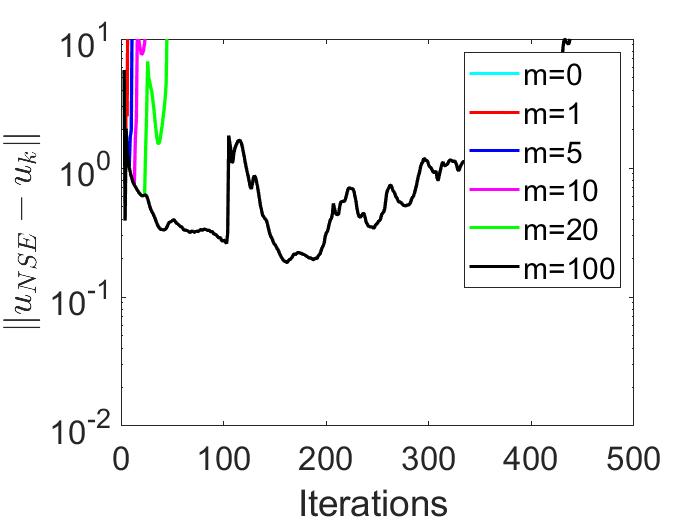}
\includegraphics[width = .24\textwidth, height=.2\textwidth]{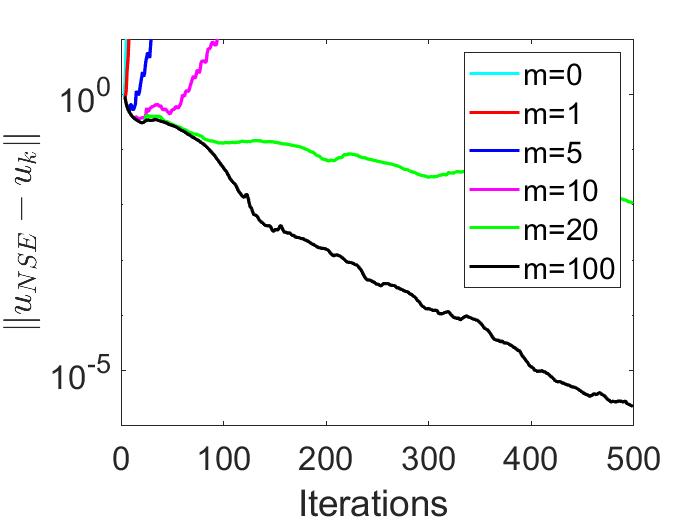}
\caption{Convergence of Anderson accelerated grad-div stabilized AH method and IPP iteration for $Re=100$ for varying parameters and $\gamma=10$.\label{fig:AHvsIPP}}
\end{figure}

\begin{figure}[H]
\centering
\hspace{.2in} $(\rho=50,\ \alpha=\frac{\varepsilon}{\nu})$  \hspace{.35in} $(\rho=50,\ \alpha=\frac{1}{\nu})$ \hspace{.25in} $(\rho=100,\ \alpha=\frac{\varepsilon}{\nu})$ (IPP) \hspace{.15in} $(\rho=100,\ \alpha=\frac{1}{\nu})$ \hspace{.2in} \\

\includegraphics[width = .24\textwidth, height=.2\textwidth]{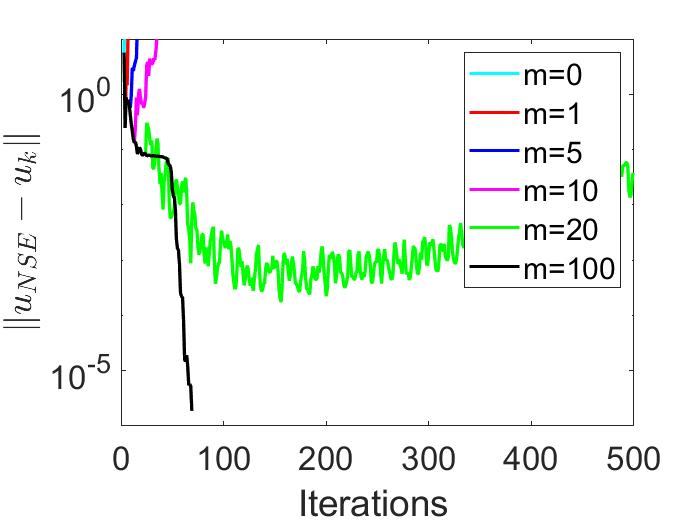}
\includegraphics[width = .24\textwidth, height=.2\textwidth]{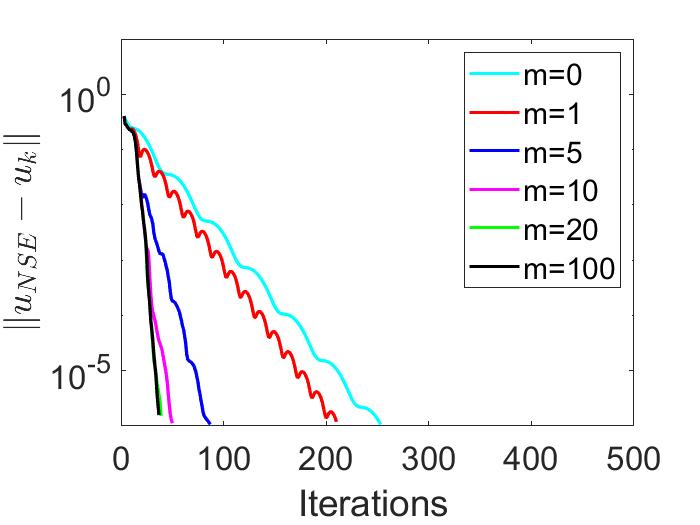}
\includegraphics[width = .24\textwidth, height=.2\textwidth]{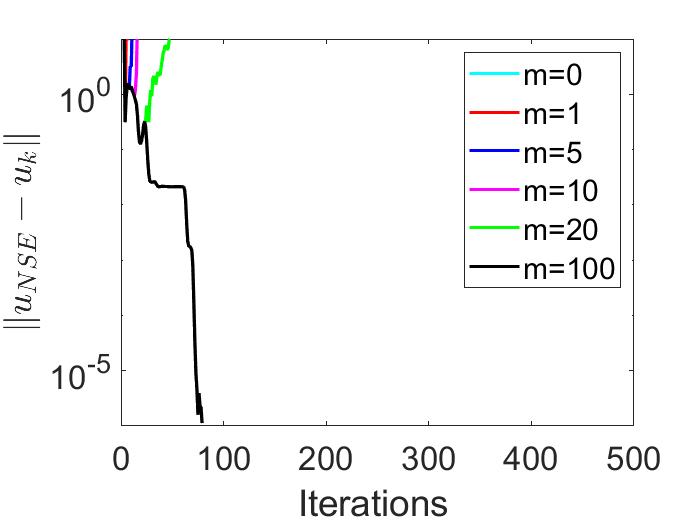}
\includegraphics[width = .24\textwidth, height=.2\textwidth]{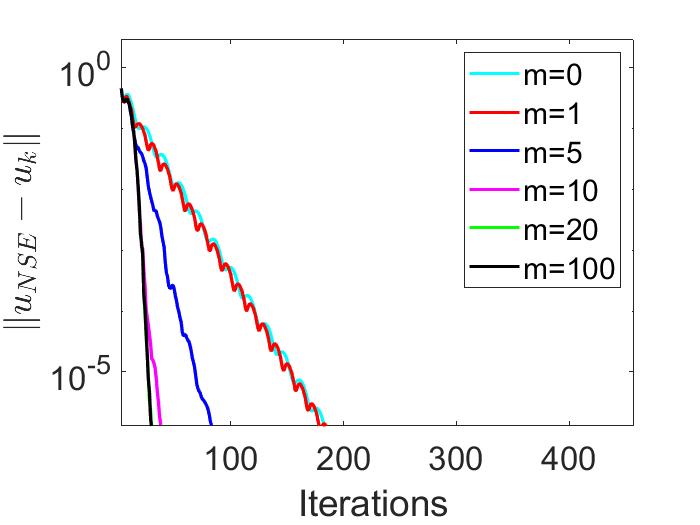}
\caption{Convergence of Anderson accelerated grad-div stabilized AH method and IPP iteration for $Re=100$ for varying parameters and $\gamma=100$.\label{fig:AHvsIPP100}}
\end{figure}

\section{Conclusions}

This paper developed multiple improvements to the AH method for solving the steady Navier-Stokes equations, and showed that with grad-div stabilization, SV elements and Anderson acceleration, the AH method can be a very effective and efficient solver.  SV elements and grad-div stabilization allowed us to connect AH to the well known iterated penalty Picard method, which has good convergence properties under small data \cite{RVX21}.  We also proved that the AH iteration, under certain conditions on the data and parameters, fits into the Anderson acceleration analysis framework developed in \cite{PR21} and thus AA improves the linear convergence rate of the AH method by the gain of the underlying AA optimization problem.  We also gave results of several numerical tests that show how each of these improvements is important for good convergence behavior, and when used together the AH method can be very effective.

\section{Acknowledgment}

\noindent{Author PG acknowledges partial support from National Science Foundation grant DMS 1907823.  Authors LR and DV acknowledge partial support from NSF grant DMS 2011490.}

\end{document}